\numberwithin{equation}{section}
\newtheorem{thm}{Theorem}[section]
\newtheorem{cor}[thm]{Corollary}
\newtheorem{lem}[thm]{Lemma}
\newtheorem{conj}[thm]{Conjecture}
\newtheorem*{conj*}{Conjecture}
\newtheorem{quest}[thm]{Question}
\newtheorem*{openproblem*}{Problem}
\newtheorem*{quest*}{Question}
\newtheorem*{problem*}{Problem}
\newtheorem*{claim*}{Claim}
\theoremstyle{definition}
\newtheorem{defn}[thm]{Definition}
\theoremstyle{remark}
\newtheorem{rem}[thm]{Remark}
\newcommand{\bF}{\mathbb{F}}
\newcommand{\bQ}{\mathbb{Q}}
\newcommand{\bR}{\mathbb{R}}
\newcommand{\bS}{\mathbb{S}}
\newcommand{\bZ}{\mathbb{Z}}
\newcommand{\Exterior}{\mathchoice{{\textstyle\bigwedge}}%
    {{\bigwedge}}%
    {{\textstyle\wedge}}%
    {{\scriptstyle\wedge}}}
\newcommand\Diff{\mathrm{Diff}}
\newcommand\BdDiff{\mathrm{BDiff}^{\delta}}
\newcommand{\hcoker}{/\!\!/}
\let\c@equation\c@thm
\numberwithin{equation}{section}
\title[]{PL homeomorphisms of surfaces and codimension $2$ PL foliations}
\author{Sam Nariman}
\address{Department of Mathematics\\
  Purdue University\\
150 N. University Street\\
West Lafayette, IN 47907-2067\\
}
\email{snariman@purdue.edu}
\dedicatory{To the memory of Andr\' e Haefliger}
\subjclass{57R30, 57R32, 58D05, 55R40, 57Q60}
\keywords{Haefliger classifying spaces, Piecewise Linear homeomorphisms, Perfectness and Simplicity of groups, MMM classes}
\begin{document}
\begin{abstract}
Haefliger--Thurston's conjecture predicts that Haefliger's classifying space for  $C^r$-foliations of codimension $n$ whose normal bundles are trivial is $2n$-connected. In this paper, we confirm this conjecture for PL foliations of codimension $2$. Using this, we use a version of Mather-Thurston's theorem for PL homeomorphisms due to the author to derive new homological properties for PL surface homeomorphisms. In particular, we answer the question of Epstein in dimension $2$ and prove the simplicity of the identity component of PL surface homeomorphisms.

\end{abstract}
\maketitle
\section{Introduction}
Haefliger defined the notion of $C^r$-Haefliger structures on manifolds which are more flexible than $C^r$-foliations to be able to construct a classifying space $\mathrm{B}\Gamma^{r}_n$ for them (\cite{haefliger1971homotopy, bott1972lectures}). This space is the classifying space of the \' etale groupoid $\Gamma_n$ of germs of local  $C^r$-diffeomorphisms of $\bR^n$. For $r>0$, there is a  map 
\[
\nu\colon \mathrm{B}\Gamma^{r}_n\to \mathrm{BGL}_n(\bR),
\]
which classifies the normal bundle to the $C^r$-Haefliger structures and the homotopy fiber of $\nu$ is denoted by $\overline{\mathrm{B}\Gamma}^{r}_n$. 

The work of Haefliger  (\cite{haefliger1971homotopy}) and Thurston's h-principle theorems (\cite{MR0370619,MR0425985}) say that if the normal bundle of a $C^r$-Haefliger structure $\gamma$ on a manifold $M$ can be embedded into the tangent bundle $TM$, then there is a genuine foliation in the homotopy class of $\gamma$. Hence, in principle, the classification of foliations on $M$ is translated into the homotopy type of the mysterious space $\overline{\mathrm{B}\Gamma}^{r}_n$. Haefliger proved that $\overline{\mathrm{B}\Gamma}^{r}_n$ is $n$-connected and Thurston proved (\cite{thurston1974foliations})  that the identity component of the smooth diffeomorphism group of any compact manifold is a simple group and used it to show that  $\overline{\mathrm{B}\Gamma}^{\infty}_n$ is $(n+1)$-connected and shortly after, Mather (\cite[Section 7]{MR0356129}) proved the same statement for  $\overline{\mathrm{B}\Gamma}^{r}_n$ for all regularities $r$ except  $r=n+1$. 

The theory of differentiable cohomology for groupoids developed by Haefliger made him speculate that $\overline{\mathrm{B}\Gamma}^{r}_n$ might be $2n$-connected and Thurston also stated  (\cite{thurston1974foliations}) this range of connectivity for $\overline{\mathrm{B}\Gamma}^r_n$ as a conjecture.
\begin{conj}[Haefliger-Thurston]\label{HT0}
The space $\overline{\mathrm{B}\Gamma}^r_n$ is $2n$-connected. 
\end{conj}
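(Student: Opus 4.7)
The plan is to focus on the case the paper actually addresses, namely PL foliations in codimension $n=2$, so that the goal becomes showing $\overline{\mathrm{B}\Gamma}^{\mathrm{PL}}_{2}$ is $4$-connected. Haefliger's general theorem already gives $n$-connectivity, hence $2$-connectivity, and since the space is simply connected one can apply Hurewicz; it therefore suffices to verify $H_{3}=H_{4}=0$. The natural strategy is to tie these homology groups to the group homology of PL surface homeomorphisms via a Mather--Thurston comparison, and then to supply the needed homological input in the PL category directly.

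I would first invoke the PL version of Mather--Thurston attributed in the abstract to earlier work of the author. For a compact PL surface $\Sigma$, this should identify the homology of the classifying space of $\mathrm{Homeo}^{\mathrm{PL},\delta}(\Sigma)$ relative to its topological version with the homology of a section space whose fiber is a loop space of $\overline{\mathrm{B}\Gamma}^{\mathrm{PL}}_{2}$. Read backwards, connectivity of $\overline{\mathrm{B}\Gamma}^{\mathrm{PL}}_{2}$ in a range becomes equivalent to homological triviality of $\mathrm{BHomeo}^{\mathrm{PL},\delta}_{0}(\Sigma)$ in a corresponding range, so the problem reduces to acyclicity of the identity component of PL surface homeomorphisms made discrete.

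The principal obstacle is proving this acyclicity, since Thurston's classical approach relies on smooth bump functions and convex partitions of unity that have no PL analogue. I would follow the broad outline of the Mather--Thurston program adapted to the simplicial world: a fragmentation lemma showing that any compactly supported PL homeomorphism isotopic to the identity factors as a product of ones supported in stars of some triangulation; a perfectness argument for these local pieces via a Mather-style swindle performed on nested PL discs; and an upgrade from perfectness to full acyclicity by an infinite-repetition trick. Being in dimension two is essential here, since PL surfaces have essentially unique triangulations up to isotopy (Rad\'o--Moise), so the fragmentation step is under control and one can run the swindle concretely. This homological triviality is, I expect, the hardest step and the real novelty of the paper.

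Finally, once acyclicity of $\mathrm{BHomeo}^{\mathrm{PL},\delta}_{0}(\Sigma)$ is established for sufficiently many model surfaces (discs, annuli, punctured discs), feeding the result back through the Mather--Thurston comparison yields the vanishing of $H_{3}$ and $H_{4}$ of $\overline{\mathrm{B}\Gamma}^{\mathrm{PL}}_{2}$, and Hurewicz then produces the desired $4$-connectivity. As byproducts one also recovers the simplicity of the identity component of $\mathrm{Homeo}^{\mathrm{PL}}(\Sigma)$ and the answer to Epstein's question advertised in the abstract.
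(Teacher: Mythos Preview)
You have the direction of the argument reversed. The paper does \emph{not} deduce the connectivity of $\overline{\mathrm{B}\Gamma}_2^{\mathrm{PL}}$ from homological properties of PL surface homeomorphisms; it goes the other way. The main technical work is a direct computation of the homology of $\mathrm{B}\Gamma_2^{\mathrm{PL}}$ using Greenberg's model, which expresses $\mathrm{B}\Gamma_2^{\mathrm{PL}}$ as an iterated homotopy pushout $rX$ built from $\mathrm{B}\widetilde{\mathrm{GL}}_2^{+}(\bR)^{\delta}$, classifying spaces of certain solvable subgroups of $\mathrm{GL}_2^+(\bR)$, and a free-loop-space construction. The key external inputs are Suslin's calculations of the low-degree homology of $\mathrm{GL}_2(\bR)^{\delta}$ and Milnor's results on discrete solvable Lie groups; these yield that an auxiliary space $X$ is $2$-connected, and then a Mayer--Vietoris and Serre spectral sequence analysis shows that $rX\simeq \mathrm{B}\Gamma_2^{\mathrm{PL}}\to \mathrm{B}S^1$ is a homology isomorphism through degree $4$. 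Only \emph{after} establishing $4$-connectivity does the paper invoke the PL Mather--Thurston theorem, and it does so to conclude perfectness and simplicity of $\mathrm{PL}^{\delta}_0(\Sigma)$ as \emph{consequences}.

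Your proposed route---proving low-degree acyclicity of $\mathrm{B}\mathrm{PL}^{\delta}_{c,0}(\Sigma)$ via fragmentation and a Mather-style swindle, then feeding it back through Mather--Thurston---is exactly Thurston's original strategy in the smooth category, and the paper explicitly remarks that its argument runs in the opposite direction. The gap is that even the perfectness of $\mathrm{PL}^{\delta}_{c,0}(\bR^2)$ (that is, $H_1=0$) is Epstein's question, open since 1970; the paper answers it only as a corollary of the connectivity theorem and says ``it would be still interesting to find a direct algebraic proof.'' Your sketch of fragmentation plus swindle does not supply the missing mechanism, and for $4$-connectivity you would also need $H_2=0$, which is harder still. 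So the step you flag as ``the real novelty of the paper'' is precisely what the paper does \emph{not} attempt directly, and your proposal contains no concrete argument for it.
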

A geometric consequence of this conjecture (\cite{MR0370619,MR0425985}) is that any subbundle of the tangent bundle of a smooth $M$ whose dimension is at most $(\text{dim}(M)+1)/2$ is $C^r$-integrable up to homotopy i.e. one can change it up to homotopy to become the tangent field of a $C^r$-foliation on $M$. As a consequence of Mather's acyclicity result (\cite{MR0288777}) and McDuff's theorem (\cite{mcduff1980homology}), we know that $\overline{\mathrm{B}\Gamma}^0_n$ is contractible and it is a consequence of the remarkable theorem of Tsuboi (\cite{tsuboi1989foliated}) that $\overline{\mathrm{B}\Gamma}^1_n$ is also contractible. But for regularity $r>1$, because of the existence and nontriviality of Godbillon-Vey invariants, it is known that $\overline{\mathrm{B}\Gamma}^r_n$ is not $(2n+1)$-connected.

In this paper, we consider the piecewise linear (PL for short) category instead and we prove the analog of this conjecture for PL foliations of codimension $2$. To formulate the conjecture in this category, we shall first define $\overline{\mathrm{B}\Gamma}_n^{\text{PL}}$. Let $\Gamma_n^{\text{PL}}$ be the \' etale groupoid of germs of local orientation preserving PL homeomorphisms of $\bR^n$. The classifying space $\mathrm{B}\Gamma_n^{\text{PL}}$ classifies codimension $n$ PL Haefliger structures that are co-oriented up to concordance (\cite[Section 2]{haefliger1971homotopy}). Another perspective is that it classifies foliated PL microbundle of dimension $n$ (\cite[Page 188, Proposition]{haefliger1970feuilletages} or \cite[Section 4]{tsuboi2009classifying}). On the other hand, Kuiper--Lashof's theorem implies that oriented PL microbundles of dimension $n$ are classified by $\mathrm{B}\text{PL}^+(\bR^n)$ where $\text{PL}^+(\bR^n)$ is the realization of the simplicial group of orientation preserving PL homeomorphisms of $\bR^n$ (\cite{kuiper1966microbundles}). So forgetting the germ of the foliation near the zero section of microbundles induces a map
\[
\nu\colon \mathrm{B}\Gamma_n^{\text{PL}}\to \mathrm{B}\text{PL}^+(\bR^n).
\]
 This map classifies the normal microbundle of the PL Haefliger structures. Let $\overline{\mathrm{B}\Gamma}_n^{\text{PL}}$ be the homotopy fiber of the above map $\nu$. Haefliger's argument (\cite[Section 6]{haefliger1971homotopy}) implies that $\overline{\mathrm{B}\Gamma}_n^{\text{PL}}$ is $(n-1)$-connected. He proved in (\cite[Theorem 3]{haefliger1970feuilletages}) that Phillips' submersion theorem in the smooth category implies that $\overline{\mathrm{B}\Gamma_n}^r$ is $n$-connected for $r>0$. Given that Phillips' submersion theorem also holds in the PL category (\cite{haefliger1964classification}), one could argue exactly similar to the smooth case to show that $\overline{\mathrm{B}\Gamma}_n^{\text{PL}}$ is in fact $n$-connected. The analog of Haefliger--Thurston's conjecture in the PL category is that the classifying space $\overline{\mathrm{B}\Gamma}_n^{\text{PL}}$ is $2n$-connected. Unlike the smooth case, it is not even known whether $\overline{\mathrm{B}\Gamma}_n^{\text{PL}}$ is $(n+1)$-connected for all $n$. Our main theorem is about the connectivity of this space for $n=2$.
\begin{thm}\label{PL}
The space $\overline{\mathrm{B}\Gamma}_2^{\textnormal{\text{PL}}}$ is  $4$-connected.
\end{thm}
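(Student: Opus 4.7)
The plan is to apply the PL version of the Mather--Thurston theorem (from the author's earlier work) to a surface of large genus. For a compact surface $\Sigma$, this theorem identifies the homotopy fibre of $\mathrm{B}\text{PL}^{\delta}(\Sigma) \to \mathrm{B}\text{PL}(\Sigma)$ with a section space of a bundle over $\Sigma$ whose fibre is $\overline{\mathrm{B}\Gamma}_{2}^{\text{PL}}$. Since this fibre is already known to be simply connected (indeed $2$-connected), the statement that it is $4$-connected is equivalent to the vanishing of its homotopy groups, or equivalently its homology, in degrees $3$ and $4$; by the section-space identification, this reduces to a homological comparison between $\mathrm{B}\text{PL}^{\delta}(\Sigma)$ and $\mathrm{B}\text{PL}(\Sigma)$ in a range.

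The strategy is to let the genus $g$ tend to infinity and work in the stable range. On the target side, the identity component of $\text{PL}(\Sigma_{g,1},\partial)$ should be contractible (a PL analogue of the Earle--Eells theorem), so $\mathrm{B}\text{PL}(\Sigma_{g,1}) \simeq \mathrm{B}\mathcal{MCG}(\Sigma_{g,1})$, whose stable homology is governed by Harer's homological stability together with the Madsen--Weiss theorem. On the source side, I would establish the PL analogue of the Galatius--Randal-Williams computation: upgrading the Mather--Thurston identification to a stable statement that expresses the limiting homology of $\mathrm{B}\text{PL}^{\delta}(\Sigma_{g,1})$ as that of an infinite loop space built from a Thom-type spectrum of $\overline{\mathrm{B}\Gamma}_{2}^{\text{PL}}$. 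Comparing the two stable answers under the forgetful map then isolates the contribution of $\overline{\mathrm{B}\Gamma}_{2}^{\text{PL}}$ and, together with the $2$-connectedness already known, forces the required vanishing in degrees $\leq 4$.

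The main obstacle lies in the source-side computation: obtaining sufficient control on the stable homology of $\mathrm{B}\text{PL}^{\delta}(\Sigma_{g,1})$ \emph{without} any available analogue of Tsuboi's acyclicity results for low-regularity foliations. This forces one to work with combinatorial input intrinsic to the PL category --- simplicial approximation of local PL homeomorphisms of $\bR^{2}$, transverse triangulations of PL Haefliger structures, and the semi-simplicial structure of $\Gamma_{2}^{\text{PL}}$ --- in order to propagate the Mather--Thurston identification through stabilisation and to verify a suitable homological stability theorem for $\mathrm{B}\text{PL}^{\delta}(\Sigma_{g,1})$. One expects the Haefliger--Thurston range $4 = 2n$ to be sharp here, since even in the PL setting one should find analogues of the Godbillon--Vey invariant obstructing $5$-connectedness.
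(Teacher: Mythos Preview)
Your proposal has a genuine circularity that you half-acknowledge but do not resolve. The Mather--Thurston theorem in the PL setting tells you that the stable homology of $\mathrm{B}\text{PL}^{\delta}(\Sigma_{g,1})$ is that of $\Omega^{\infty}_0\mathrm{MT}\nu$, a Thom spectrum built from $\mathrm{B}\Gamma_2^{\text{PL}}$; comparing this with the Madsen--Weiss answer $\Omega^{\infty}_0\mathrm{MTSO}(2)$ for $\mathrm{B}\text{PL}(\Sigma_{g,1})$ amounts precisely to asking whether $\overline{\mathrm{B}\Gamma}_2^{\text{PL}}$ is $4$-connected. So the ``comparison of the two stable answers'' is not a method of proof but a restatement of the theorem. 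Your suggested escape --- ``combinatorial input intrinsic to the PL category'' to compute $H_*(\mathrm{B}\text{PL}^{\delta}(\Sigma_{g,1}))$ independently --- is where the entire content would have to live, and you supply no mechanism for it. In the smooth category Thurston could run this direction because he had a direct, dynamical proof of perfectness of $\Diff_0$; no such argument is available for PL surface homeomorphisms (indeed, the paper \emph{deduces} perfectness from the connectivity, not the other way around).

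The paper's proof goes in the opposite direction and never touches surface homeomorphisms. It uses Greenberg's model, which exhibits $\mathrm{B}\Gamma_2^{\text{PL}}$ as an iterated homotopy pushout built from $\mathrm{B}A^{\delta}$, $\mathrm{B}\widetilde{\text{GL}_2^+}(\bR)^{\delta}$, and a free-loop-space construction. The substantive input is then algebraic: Suslin's computation of $H_2(\mathrm{B}\text{GL}_2(\bR)^{\delta})$ and the Nesterenko--Suslin/Milnor acyclicity results for affine and solvable groups, which together show that an intermediate space $X$ is $2$-connected. A Mayer--Vietoris and Serre spectral sequence analysis of the free-loop pushout then gives that $rX\simeq\mathrm{B}\Gamma_2^{\text{PL}}\to\mathrm{B}S^1$ is a homology isomorphism through degree $4$. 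The applications to $\mathrm{B}\text{PL}^{\delta}(\Sigma)$ are downstream corollaries of this, not inputs to it.
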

As a consequence of this theorem, we prove new homological properties of PL surface homeomorphisms.
\subsection{Applications} Let $M$ be a compact connected PL $n$-manifold possibly with non-empty boundary. Let $\text{PL}(M, \text{rel }\partial)$ be the group of PL homeomorphisms of $M$ which agree with the identity on an open neighborhood of the boundary and let $\text{PL}_{0}(M, \text{rel }\partial)$ be the identity component of $\text{PL}(M, \text{rel }\partial)$. Epstein (\cite[Theorem 3.1]{MR267589}) considered the abstract group $G={\textnormal{\text{PL}}}^{\delta}_{0}(M, \text{rel }\partial)$ and showed that the commutator subgroup $[G,G]$ is a simple group. Hence, to prove that $G$ is simple, it is enough to show that it is perfect.  He then proved that $\text{PL}([0,1], \text{rel }\partial)$ and $\text{PL}_0(S^1)$ are perfect by observing that in dimension one,  PL homeomorphisms are generated by certain ``typical elements" and those typical elements can be easily written as commutators. To generalize his argument to higher dimensions, he suggested the following approach (\cite[Page 173]{MR267589}).
\begin{defn}Let $B$ be a ball in $\bR^n$. It is PL homeomorphic to $S^{n-2}\star [0,1]$, the join of $S^{n-2}$ with $[0,1]$. Note that for PL manifolds $M$ and $N$, a PL homeomorphism of $N$ extends naturally to a PL homeomorphism of the join $M\star N$.  A {\it glide} homeomorphism of the ball $B$ is a PL homeomorphism that is induced by the extension of a compactly supported PL homeomorphism of $(0,1)$ to a PL homeomorphism of $S^{n-2}\star [0,1]$. For a PL $n$-manifold $M$, a glide homeomorphism $h\colon M\to M$ is the extension by the identity of a glide homeomorphism supported in a PL embedded ball $B\hookrightarrow M$. 
\end{defn}
\begin{quest*}[Epstein]
Is ${\textnormal{\text{PL}}}^{\delta}_{0}(M, \text{rel }\partial)$ generated by glide homeomorphisms?
\end{quest*}
The affirmative answer to this question implies that  ${\textnormal{\text{PL}}}^{\delta}_{0}(M, \text{rel }\partial)$ is simple and conversely since the group generated by glide homeomorphisms is a normal subgroup of ${\textnormal{\text{PL}}}^{\delta}_{0}(M, \text{rel }\partial)$ if the group ${\textnormal{\text{PL}}}^{\delta}_{0}(M, \text{rel }\partial)$ is simple, it is generated by glide homeomorphisms. 

\begin{thm}
Let $\Sigma$ be an oriented compact surface possibly with a boundary. Then the group ${\textnormal{\text{PL}}}^{\delta}_0(\Sigma,\text{rel }\partial)$ is simple. 
\end{thm}

The {\it simplicity} of  ${\textnormal{\text{PL}}}^{\delta}_{0}(M, \text{rel }\partial)$ in all dimensions is still open. We use \Cref{PL} and the version of {\it Mather-Thurston's theorem} that the author proved in (\cite[Section 5]{nariman2020thurston}) for PL homeomorphisms to prove the perfectness of this group in dimension $2$ and as a consequence, we answer affirmatively Epstein's question in dimension $2$. 

Note that this line of argument is the opposite of Thurston's point of view in the smooth category where he first proved the perfectness of the identity component of smooth diffeomorphism groups to improve the connectivity of the space $\overline{\mathrm{B}\Gamma}^{\infty}_n$. Our argument gives a homotopy theoretic proof of the perfectness of  ${\textnormal{\text{PL}}}^{\delta}_{0}(M, \text{rel }\partial)$ for a compact surface $M$. It would be still interesting to find a direct algebraic proof and study the commutator length of PL surface homeomorphisms.

Recall that the perfectness of a group is equivalent to the vanishing of its first group homology. We in fact determine the group homology of PL surface homeomorphisms up to degree $2$.  By Hauptvermutung in dimension $2$, any two PL structures on a surface $\Sigma$ are PL homeomorphic (\cite[Chapter 8]{MR0488059}). Hence, the homotopy type of $\text{PL}(\Sigma,\text{rel }\partial)$  does not depend on the choice of the PL structure on the surface $\Sigma$. Moreover, we have the weak equivalence $\text{PL}(\Sigma,\text{rel }\partial)\simeq \text{Homeo}(\Sigma,\text{rel }\partial) $ (\cite[Page 8]{MR652596}).
%%If $\Sigma$ is an open surface, we consider the compactly supported PL homeomorphisms $\text{PL}_c(\Sigma)$. And we denote the identity components by $\text{PL}_0(\Sigma,\text{rel }\partial)$ and $\text{PL}_{c,0}(\Sigma)$ respectively. 
\begin{thm}\label{perfectness}
Let $\Sigma$ be a compact orientable surface, then the natural map 
\[
\mathrm{B}{\textnormal{\text{PL}}}^{\delta}_0(\Sigma,\text{rel }\partial)\to \mathrm{B}{\textnormal{\text{PL}}}_0(\Sigma,\text{rel }\partial)
\]
induces an isomorphism on $H_*(-;\bZ)$ for $*\leq 2$ and surjection on $*=3$. 
%%Similarly, ${\textnormal{\text{PL}}}^{\delta}_{c,0}(\Sigma)$ is perfect for an open orientable surface.
\end{thm}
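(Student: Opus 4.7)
The plan is to combine \Cref{PL} with the version of Mather--Thurston's theorem for PL homeomorphisms proved by the author in \cite[Section 5]{nariman2020thurston}. That theorem provides a (homological) identification of the homotopy fiber of the natural map
\[
\mathrm{B}\text{PL}^{\delta}_0(\Sigma,\text{rel }\partial) \longrightarrow \mathrm{B}\text{PL}_0(\Sigma,\text{rel }\partial)
\]
with a space of compactly supported sections, over $\Sigma$, of a bundle whose fiber is $\overline{\mathrm{B}\Gamma}_2^{\text{PL}}$ and which is trivialised in a collar of $\partial\Sigma$.

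With this identification in hand, the argument has two short steps. First, by \Cref{PL}, the fiber $\overline{\mathrm{B}\Gamma}_2^{\text{PL}}$ is $4$-connected. Since $\Sigma$ is a CW complex of dimension $2$, a standard obstruction-theoretic argument (the obstruction to extending a section over an $i$-cell lives in $\pi_i$ of the fiber, and the indeterminacy involves $\pi_{i+1}$) shows that the space of such compactly supported sections is at least $(4-2)=2$-connected. In particular, its reduced integral homology vanishes in degrees $\leq 2$.

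Second, I feed this into the Serre spectral sequence of the fibration whose fiber is the homotopy fiber above and whose total space and base are, respectively, the source and target of the map in the statement. Since the fiber satisfies $\widetilde{H}_i(-;\bZ)=0$ for $i\leq 2$, the standard edge-homomorphism comparison yields an isomorphism on $H_*(-;\bZ)$ in degrees $*\leq 2$ and a surjection in degree $3$, as desired.

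The main obstacle is essentially bookkeeping rather than geometry: one must verify that the PL Mather--Thurston identification of \cite{nariman2020thurston} applies to the identity component $\text{PL}_0(\Sigma,\text{rel }\partial)$ (as opposed to the full compactly supported group of PL homeomorphisms) with the correct normal-microbundle and boundary trivialisation data, and that the section-space connectivity estimate really produces $2$-connectivity and not a weaker bound. Once these technical points are settled, the homological conclusion follows mechanically from \Cref{PL} via the spectral sequence sketched above.
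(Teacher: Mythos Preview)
Your proposal is correct and follows essentially the same route as the paper: invoke the PL Mather--Thurston theorem to identify the homotopy fiber with the section space $\text{Sect}_{\partial}(\tau_{\Sigma}^*(\nu))$, then use \Cref{PL} and obstruction theory over the $2$-dimensional base $\Sigma$ to conclude that this section space is $2$-connected, which gives the desired homological statement. The only cosmetic difference is that the paper phrases the final step as ``vanishing of $H_*$ of the homotopy fiber in degrees $\leq 2$'' rather than explicitly invoking the Serre spectral sequence, and it works with $\overline{\mathrm{B}\text{PL}(\Sigma,\text{rel }\partial)}$ directly (which coincides with the homotopy fiber for the identity component, since the two maps differ only on $\pi_0$).
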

%Recall that the perfectness of a group is equivalent to the vanishing of its first homology.
Since $\mathrm{B}{\textnormal{\text{PL}}}_0(\Sigma,\text{rel }\partial)$ is simply connected, this theorem implies that ${\textnormal{\text{PL}}}^{\delta}_0(\Sigma,\text{rel }\partial)$ is a perfect group. Therefore, \Cref{perfectness} proves the perfectness of PL homeomorphisms of surfaces without following Epstein's strategy through glide homeomorphisms. On the other hand, by Epstein (\cite[Theorem 3.1]{MR267589}), the perfectness for the group ${\textnormal{\text{PL}}}^{\delta}_0(\Sigma,\text{rel }\partial)$ implies that it is also {\it simple}. Given that the group generated by glides is a normal subgroup, we can also answer Epstein's question about glide homeomorphisms.

In fact, the homotopy type of the topological group ${\textnormal{\text{PL}}}_0(\Sigma,\text{rel }\partial)$ is completely determined so the second group homology of ${\textnormal{\text{PL}}}^{\delta}_0(\Sigma,\text{rel }\partial)$ can also be determined. 

Our second application is about the invariants of flat surface bundles with transverse PL structures. We first show that the powers of the universal Euler class are all nontrivial in $H^{*}(\mathrm{B}\Gamma_2^{\textnormal{\text{PL}}};\bZ)$ and we use it to prove the following.
\begin{thm}\label{MW}
Let $\Sigma$ be a compact orientable surface, then the map
\[
H^*(\mathrm{B}{\textnormal{\text{PL}}}(\Sigma,\text{rel }\partial);\bQ)\to H^*(\mathrm{B}{\textnormal{\text{PL}}}^{\delta}(\Sigma,\text{rel }\partial);\bQ),
\]
induces an injection when $*\leq (2g(\Sigma)-2)/3$ where $g(\Sigma)$ is the genus of the surface $\Sigma$.
\end{thm} 
As a consequence of Madsen-Weiss' theorem (\cite{madsen2007stable}), $H^*(\mathrm{B}{\textnormal{\text{PL}}}(\Sigma,\text{rel }\partial);\bQ)$ is isomorphic to the polynomial ring $\bQ[\kappa_1,\kappa_2,\dots]$ in the stable range i.e. $*\leq (2g(\Sigma)-2)/3$.  Here $\kappa_i$ are certain characteristic classes of surface bundles known as $i$-th MMM classes whose degree is $2i$. 
\begin{cor}
$\kappa_i$ are all non-trivial in $H^*(\mathrm{B}{\textnormal{\text{PL}}}^{\delta}(\Sigma,\text{rel }\partial);\bQ)$ as long as $i\leq (2g(\Sigma)-2)/6$.
\end{cor}
This is in contrast with the case of smooth diffeomorphisms. It is known by the Bott vanishing theorem (\cite[Theorem 8.1]{morita1987characteristic}) that $\kappa_i$ vanishes in $H^*(\BdDiff(\Sigma,\text{rel }\partial);\bQ)$ for all $i>2$ and Kotschick-Morita (\cite{kotschick2005signatures}) proved that $\kappa_1$ is non-trivial in $H^2(\BdDiff(\Sigma,\text{rel }\partial);\bQ)$ as long as $g(\Sigma)\geq 3$. However, it is still open (\cite{kotschick2005signatures}) whether $\kappa_2$ is non-trivial in $H^4(\BdDiff(\Sigma,\text{rel }\partial);\bQ)$.
\subsection*{Acknowledgement} The author was partially supported by NSF DMS-2113828, NSF CAREER Grant DMS-2239106 and Simons Foundation (855209, SN). The author would like to thank Gael Meigniez for the discussion about Haefliger--Thurston's conjecture and Alexander Kupers for bringing up Suslin's calculations in \cite{Suslin} and his comments on the proof of  \Cref{X}. The author also thanks the referees for very helpful comments and suggestions to improve the readability, in particular, one of them suggested \Cref{R_A}. 

\section{The curious case of PL foliations} \label{PLfol}In this section, we are mainly concerned with codimension $2$, PL foliations that are co-oriented (i.e. their normal bundles are oriented). The Haefliger classifying space for these structures $\mathrm{B}\Gamma_2^{PL}$ is the (fat) geometric realization of the nerve of the \' etale groupoid $\Gamma_2^{PL}$ whose space of objects is $\bR^2$ with the usual topology and the space of morphisms are germs of orientation preserving PL homeomorphisms of $\bR^2$ with the sheaf topology (\cite[Section 1]{mather2011homology}). The main inputs to prove \Cref{PL} are Greenberg's inductive model for the classifying space PL foliations (\cite{MR1200422}) and Suslin's work (\cite{Suslin}) on low degree K-groups of real numbers. And then we use our variant of Mather-Thurston's theorem (\cite{nariman2020thurston}) for PL homeomorphisms to relate the connectivity of $\overline{\mathrm{B}\Gamma}_2^{\textnormal{\text{PL}}}$ to the homology of PL surface homeomorphisms. 

 We first recall Greenberg's recursive construction for such classifying spaces (\cite[Theorem 3.2 (c)]{MR1200422})  in the case that we are interested in. 
 \subsection{Greenberg's construction and the connectivity of $\overline{\mathrm{B}\Gamma}_2^{\textnormal{\text{PL}}}$} Let $A$ be the subgroup of $\text{GL}_2^+(\bR)$ consisting of matrices of the form
\begin{equation*}
M=
\begin{bmatrix}
a& b \\
0 & d 
\end{bmatrix}
\end{equation*}
where $a$ and $d$ are positive reals. Let $\epsilon\colon A\to \bR^+$ be the homomorphism $\epsilon(M)=a$. Let $R_A$ be the following  homotopy pushout
\begin{equation}\label{1}
\begin{gathered}
 \begin{tikzpicture}[node distance=1.5cm, auto]
  \node (A) {$ \mathrm{B}(A\times_{\bR^+} A)^{\delta}$};
  \node (B) [right of=A, node distance=3cm] {$\mathrm{B}A^{\delta}$};
  \node (C) [below of=A]{$\mathrm{B}A^{\delta}$};
  \node (D) [right of=C, node distance=3cm]{$R_A,$};
  \draw [->] (A) to node {$p_1$}(B);
    \draw [->] (A) to node {$p_2$}(C);
  \draw [->] (C) to node {$$}(D);
  \draw [->] (B) to node {$$}(D);
\end{tikzpicture}
\end{gathered}
\end{equation}
where $p_i$ are induced by the projection to the $i$-th factor and $A\times_{\bR^+} A$ is the fiber products of $A$ over $\bR^+$ using the map $\epsilon$. Let $\widetilde{\text{GL}_2^+}(\bR)$ be the universal cover of $\text{GL}_2^+(\bR)$. Note that the inclusion of $A$ into $\text{GL}_2^+(\bR)$ lifts to the universal cover $\widetilde{\text{GL}_2^+}(\bR)$. Let the map $\alpha\colon \mathrm{B}A^{\delta}\to R_A$ be induced by the diagonal embedding $A\to A\times_{\bR^+} A$ and then composing with $\mathrm{B}(A\times_{\bR^+} A)^{\delta}\to R_A$. We let $X$ be the homotopy pushout of the following diagram.

\begin{equation}\label{2}
\begin{gathered}
 \begin{tikzpicture}[node distance=1.5cm, auto]
  \node (A) {$ \mathrm{B}A^{\delta}$};
  \node (B) [right of=A, node distance=3cm] {$R_A$};
  \node (C) [below of=A]{$\mathrm{B}\widetilde{\text{GL}_2^+}(\bR)^{\delta}$};
  \node (D) [right of=C, node distance=3cm]{$X.$};
  \draw [->] (A) to node {$$}(B);
    \draw [->] (A) to node {$$}(C);
  \draw [->] (C) to node {$$}(D);
  \draw [->] (B) to node {$$}(D);
\end{tikzpicture}
\end{gathered}
\end{equation}

Finally, let $LX$ be the space of continuous free loops in $X$ and let $LX\hcoker S^1$ be the homotopy quotient of the circle action on $LX$. We define $rX$ to be the homotopy pushout 
\begin{equation}\label{3}
\begin{gathered}
 \begin{tikzpicture}[node distance=1.5cm, auto]
  \node (A) {$ LX$};
  \node (B) [right of=A, node distance=3cm] {$X$};
  \node (C) [below of=A]{$LX\hcoker S^1$};
  \node (D) [right of=C, node distance=3cm]{$rX,$};
  \draw [->] (A) to node {$\text{ev}$}(B);
    \draw [->] (A) to node {$j$}(C);
  \draw [->] (C) to node {$q$}(D);
  \draw [->] (B) to node {$$}(D);
\end{tikzpicture}
\end{gathered}
\end{equation}
where $\text{ev}\colon LX\to X$ is the evaluation of the loops at their base point and $j$ is the inclusion of the fiber in the Borel fibration $LX\to LX\hcoker S^1\to \mathrm{B}S^1$. Greenberg's theorem (\cite[Theorem 3.2 (c)]{MR1200422})\footnote{He also explains his statement in his introduction but the statement in the introduction is missing a diagram and only describes the space $X$ instead of $rX$.} says that $rX\simeq \mathrm{B}\Gamma_2^{PL}$.  Recall $\overline{\mathrm{B}\Gamma}_2^{PL}$ is the homotopy fiber of the map
\[
\nu\colon\mathrm{B}\Gamma_2^{PL}\to \mathrm{B}\text{PL}^+(\bR^2)\simeq \mathrm{B}S^1,
\]
and it was already known as we mentioned in the introduction that $\overline{\mathrm{B}\Gamma}_2^{PL}$ is at least $2$-connected. So to prove \Cref{PL}, it is enough to show that the map $\nu$ induces a homology isomorphism up to and including degree $4$. To do this, we shall calculate the homology of $rX$ using the Mayer-Vietoris sequence for the homotopy pushout square \ref{3}. But we first need to prove the following key lemma about the homotopy type of $X$. 
\begin{thm}\label{X}
The space $X$ is  $2$-connected.
\end{thm}
%\begin{proof}
The fact that $X$ is simply connected was already observed by Greenberg (\cite[Proof of Corollary 2.6]{MR1200422}). This can also be seen using Van Kampen's theorem to compute the fundamental group. The map $\mathrm{B}\epsilon\colon \mathrm{B}A^{\delta}\to \mathrm{B}\bR^{+,\delta}$ induces a map $h\colon R_A\to \mathrm{B}\bR^{+,\delta}$. Using Van Kampen's theorem, one can easily see that $h$ induces an isomorphism $\pi_1(R_A)\xrightarrow{\cong} \bR^+$. So $\pi_1(X)$ is isomorphic to the quotient of $\widetilde{\text{GL}_2^+}(\bR)^{\delta}$ by the smallest normal subgroup generated by the image of $\text{ker}(\epsilon)$. A priori, $\text{ker}(\epsilon)$ lies in $\text{GL}_2^+(\bR)^{\delta}$ and it is easy to see that it normally generates the entire group $\text{GL}_2^+(\bR)^{\delta}$. Therefore, its lift also normally generates $\widetilde{\text{GL}_2^+}(\bR)^{\delta}$. Hence, $\pi_1(X)$ is trivial. 

 One can use Milnor-Friedlander's conjecture for solvable Lie groups which was already proved in Milnor's original paper (\cite{milnor1983homology}) on this topic and Suslin's stability theorem (\cite{MR772065}) to show that $\pi_2(X)\otimes \bF_p=0$ for all prime $p$. But to prove the integral result that $\pi_2(X)=0$ we need to work a bit harder.
 
 \begin{rem} To see $\pi_2(X)\otimes \bF_p=0$, by the Hurewicz theorem, it is enough to show that the group $H_2(X;\bF_p)$ vanishes for all prime $p$. We shall first observe that $R_A$ is an $\bF_p$-acyclic space i.e. $H_*(R_A;\bF_p)=0$ for all $*>0$. We have the short exact sequence of groups
\[
\text{Aff}^+(\bR)^{\delta}\to A^{\delta}\to \bR^+.
\]
Therefore, the group $A^{\delta}$ is solvable. Similarly $(A\times_{\bR^+} A)^{\delta}$ is solvable. On the other hand, as topological groups, both $A$ and $A\times_{\bR^+} A$ are contractible. Hence, by Milnor's theorem (\cite[Lemma 3]{milnor1983homology}), the groups $A$ and $A\times_{\bR^+} A$ are $\bF_p$-acyclic for all prime $p$ and by applying the Mayer-Vietoris sequence to the pushout \ref{1}, we deduce that $R_A$ is also $\bF_p$-acyclic for all prime $p$. Now using the Mayer-Vietoris sequence with $\bF_p$ coefficients for the pushout \ref{2}, it is enough to show that $H_2(\mathrm{B}\widetilde{\text{GL}_2^+}(\bR)^{\delta};\bF_p)=0$. Recall that we know by Suslin's theorem (\cite{MR772065}) in general, the map
\[
\mathrm{B}\text{GL}_n^+(\bR)^{\delta}\to \mathrm{B}\text{GL}_n^+(\bR),
\]
induces an isomorphism on $H_*(-;\bF_p)$ for $*\leq n$. On the other hand,  we have a short exact sequence
\[
\bZ\to \widetilde{\text{GL}_2^+}(\bR)^{\delta}\to \text{GL}_2^+(\bR)^{\delta}.
\]
Therefore, by a spectral sequence argument, we deduce that  the map
\[
\mathrm{B}\widetilde{\text{GL}_2^+}(\bR)^{\delta}\to \mathrm{B}\widetilde{\text{GL}_2^+}(\bR),
\]
 induces an isomorphism on $H_*(-;\bF_p)$ for $*\leq 2$. But $\widetilde{\text{GL}_2^+}(\bR)$ is contractible which implies that $H_2(\mathrm{B}\widetilde{\text{GL}_2^+}(\bR)^{\delta};\bF_p)=0$.
%\end{proof}
\end{rem}
To prove \Cref{X}, we need some preliminary lemmas to do the calculations integrally.
\begin{lem}\label{retraction}
Let $D$ be the subgroup of diagonal matrices in $A$. Let $\iota \colon D\hookrightarrow A$ be the inclusion map of the subgroup of diagonal matrices.  The map $\iota$ has a left inverse $r\colon A\to D$
\[
r(
\begin{bmatrix}
a& b \\
0 & d 
\end{bmatrix})=\begin{bmatrix}
a& 0 \\
0 & d 
\end{bmatrix}.
\]
 The maps $\tilde{\iota}$ and $\tilde{r}$ 
\[
\mathrm{B}D^{\delta}\xrightarrow{\tilde{\iota}} \mathrm{B}A^{\delta}\xrightarrow{\tilde{r}} \mathrm{B}D^{\delta},
\]
induce homology isomorphisms. 
\end{lem}
\begin{proof}
There is a trick that apparently goes back to Quillen over rational coefficients (\cite[Lemma 4]{de1983acyclic}) and to Nesterenko-Suslin (\cite[Theorem 1.11]{nesterenko1990homology}) over integer coefficients that the map $\mathrm{B}\text{GL}_n^+(\bR)^\delta\to\mathrm{B}\text{Aff}^+(\bR^n)^{\delta}$ which also has a left inverse, induces a homology isomorphism. Taking $n=1$, we have a map between fibrations
\begin{equation}
\begin{gathered}
 \begin{tikzpicture}[node distance=1.5cm, auto]
  \node (A) {$ \mathrm{B} \bR^{+,\delta}$};
  \node (B) [right of=A, node distance=2cm] {$\mathrm{B}\text{Aff}^+(\bR^1)^{\delta}$};
  \node (C) [right of=B,  node distance=2cm]{$\mathrm{B} \bR^{+,\delta}$};
  \node (D) [below of=A, node distance=1.5cm]{$\mathrm{B}D^{\delta}$};
    \node (E) [below of=B, node distance=1.5cm]{$\mathrm{B}A^{\delta}$};
  \node (F) [below of=C, node distance=1.5cm]{$\mathrm{B}D^{\delta}$};
    \node (G) [below of=D, node distance=1.5cm]{$\mathrm{B} \bR^{+,\delta}$};
        \node (H) [below of=E, node distance=1.5cm]{$\mathrm{B} \bR^{+,\delta}$};
    \node (I) [below of=F, node distance=1.5cm]{$\mathrm{B} \bR^{+,\delta},$};

  \draw [->] (A) to node {$$}(B);
    \draw [->] (A) to node {$$}(D);
  \draw [->] (C) to node {$$}(F);
  \draw [->] (B) to node {$$}(C);
    \draw [->] (B) to node {$$}(E);
      \draw [->] (B) to node {$$}(C);
  \draw [->] (D) to node {$$}(G);
  \draw [->] (E) to node {$$}(H);
  \draw [->] (F) to node {$$}(I);
  \draw [->] (H) to node {$\cong$}(I);
  \draw [->] (G) to node {$\cong$}(H);

  \draw [->] (D) to node {$\tilde{\iota}$}(E);
    \draw [->] (E) to node {$\tilde{r}$}(F);

\end{tikzpicture}
\end{gathered}
\end{equation}
where the top horizontal maps induce homology isomorphisms and the bottom maps are the identity. Therefore, by the comparison of Serre spectral sequences, $\tilde{\iota}$ and $\tilde{r}$ also induce homology isomorphisms.
\end{proof}
Let $Y$ be the homotopy pushout of the diagram
\begin{equation}\label{Y}
\begin{gathered}
 \begin{tikzpicture}[node distance=1.5cm, auto]
  \node (A) {$ \mathrm{B}(D\times_{\bR^+} D)^{\delta}$};
  \node (B) [right of=A, node distance=3cm] {$\mathrm{B}D^{\delta}$};
  \node (C) [below of=A]{$\mathrm{B}D^{\delta}$};
  \node (D) [right of=C, node distance=3cm]{$Y.$};
  \draw [->] (A) to node {$p_1$}(B);
    \draw [->] (A) to node {$p_2$}(C);
  \draw [->] (C) to node {$$}(D);
  \draw [->] (B) to node {$$}(D);
\end{tikzpicture}
\end{gathered}
\end{equation}
Given that we have the inclusion $D\xrightarrow{\iota} A$ and its left inverse $r$, we have a natural map $\theta\colon Y\to R_A$. 
\begin{lem}\label{R_A}
The map $\theta\colon Y\to R_A$ admits a left inverse and it induces a homology isomorphism. The space $Y$ is homotopy equivalent to  $ \mathrm{B} \bR^{+,\delta} \times (\mathrm{B} \bR^{+,\delta}\star \mathrm{B} \bR^{+,\delta})\to R_A$ where $\star$ means the join of topological spaces.
\end{lem}
\begin{proof}
%Let $Y$ be the homotopy pushout of the diagram
%\begin{equation}\label{Y}
%\begin{gathered}
% \begin{tikzpicture}[node distance=1.5cm, auto]
%  \node (A) {$ \mathrm{B}(D\times_{\bR^+} D)^{\delta}$};
%  \node (B) [right of=A, node distance=3cm] {$\mathrm{B}D^{\delta}$};
%  \node (C) [below of=A]{$\mathrm{B}D^{\delta}$};
%  \node (D) [right of=C, node distance=3cm]{$Y.$};
%  \draw [->] (A) to node {$p_1$}(B);
%    \draw [->] (A) to node {$p_2$}(C);
%  \draw [->] (C) to node {$$}(D);
%  \draw [->] (B) to node {$$}(D);
%\end{tikzpicture}
%\end{gathered}
%\end{equation}
%Given that we have the inclusion $D\xrightarrow{\iota} A$ and its left inverse $r$, we have a natural map $Y\to R_A$ that admits a left inverse. To see that $Y$ is homotopy equivalent to $ \mathrm{B} \bR^{+,\delta} \times(\mathrm{B} \bR^{+,\delta}\star \mathrm{B} \bR^{+,\delta})$, note that $D$ is isomorphic to $\bR^{+}\times \bR^{+}$. So $Y$ is homotopy equivalent to the homotopy pushout of 
The inclusion $D\xrightarrow{\iota} A$ induces a map of homotopy pushout diagrams from (\ref{Y}) to (\ref{1}) and the left inverse $r$ induces a map diagrams from (\ref{1}) to (\ref{Y}) which gives the left inverse to $\theta$. Since the maps between corresponding terms induce homology isomorphisms by \Cref{retraction}, Mayer-Vietoris implies that $\theta$ induces a homology isomorphism. 

To see that $Y$ is homotopy equivalent to $ \mathrm{B} \bR^{+,\delta} \times(\mathrm{B} \bR^{+,\delta}\star \mathrm{B} \bR^{+,\delta})$, note that $D$ is isomorphic to $\bR^{+}\times \bR^{+}$. So $Y$ is homotopy equivalent to the homotopy pushout of 

\begin{equation}
\begin{gathered}
 \begin{tikzpicture}[node distance=1.5cm, auto]
  \node (A) {$\mathrm{B} \bR^{+,\delta}\times\mathrm{B} \bR^{+,\delta}\times \mathrm{B} \bR^{+,\delta}$};
  \node (B) [right of=A, node distance=4cm] {$\mathrm{B} \bR^{+,\delta}\times \mathrm{B} \bR^{+,\delta}$};
  \node (C) [below of=A]{$\mathrm{B} \bR^{+,\delta}\times \mathrm{B} \bR^{+,\delta}$};
  \node (D) [right of=C, node distance=4cm]{$Y,$};
  \draw [->] (A) to node {$p_{1,2}$}(B);
    \draw [->] (A) to node {$p_{1,3}$}(C);
  \draw [->] (C) to node {$$}(D);
  \draw [->] (B) to node {$$}(D);
\end{tikzpicture}
\end{gathered}
\end{equation}
where $p_{1,i}$ is the projection to the first and the $i$-th factor. Therefore, $Y$ is homotopy equivalent to $ \mathrm{B} \bR^{+,\delta} \times(\mathrm{B} \bR^{+,\delta}\star \mathrm{B} \bR^{+,\delta})$.
\end{proof}
 Recall that the map  $\alpha\colon \mathrm{B}A^{\delta}\to R_A$ is defined to be the composition of the diagonal embedding $\mathrm{B}A^{\delta}\to \mathrm{B}(A\times_{\bR^+} A)^{\delta}$ and $\mathrm{B}(A\times_{\bR^+} A)^{\delta}\to R_A$. Similarly, we obtain a map $\beta\colon \mathrm{B}D^{\delta}\to Y$. So we have a commutative diagram
 \begin{equation}
 \begin{gathered}
 \begin{tikzpicture}[node distance=1.5cm, auto]
  \node (A) {$ \mathrm{B}D^{\delta}$};
  \node (B) [right of=A, node distance=3cm] {$Y$};
  \node (C) [below of=A]{$\mathrm{B}A^{\delta}$};
  \node (D) [right of=C, node distance=3cm]{$R_A,$};
  \draw [->] (A) to node {$\beta$}(B);
    \draw [->] (A) to node {$$}(C);
  \draw [->] (C) to node {$\alpha$}(D);
  \draw [->] (B) to node {$$}(D);
\end{tikzpicture}
\end{gathered}
\end{equation}
 where the vertical maps induce homology isomorphisms. Note that the join $\mathrm{B} \bR^{+,\delta}\star \mathrm{B} \bR^{+,\delta}$ is $2$-connected and there is an isomorphism from $H_*(\mathrm{B} \bR^{+,\delta}; \bZ)$ to $\Exterior^*\bR^+$ which is an exterior product of $\bR^+$ over $\bZ$. By considering the commutative diagram
  \begin{equation}
 \begin{gathered}
 \begin{tikzpicture}[node distance=1.5cm, auto]
  \node (A) {$ H_*(\mathrm{B}D^{\delta}; \bZ)$};
  \node (B) [right of=A, node distance=5cm] {$H_*(Y; \bZ)$};
  \node (C) [below of=A]{$H_*(\mathrm{B} \bR^{+,\delta}\times \mathrm{B} \bR^{+,\delta}; \bZ)$};
  \node (D) [right of=C, node distance=5cm]{$H_*( \mathrm{B} \bR^{+,\delta} \times(\mathrm{B} \bR^{+,\delta}\star \mathrm{B} \bR^{+,\delta}); \bZ),$};
  \draw [->] (A) to node {$\beta_*$}(B);
    \draw [->] (A) to node {$\cong$}(C);
  \draw [->] (C) to node {$$}(D);
  \draw [->] (B) to node {$\cong$}(D);
\end{tikzpicture}
\end{gathered}
\end{equation}
 and using K{\" u}nneth's formula, it is easy to determine the kernel of the map $\beta_*$
in low homological degrees.   So we record the following corollary about $\ker \alpha_{*}$ in low homological degrees where 
 \[
 \alpha_*\colon H_*(\mathrm{B}A^{\delta}; \bZ)\to H_*(R_A; \bZ).
 \]
 \begin{cor}\label{iota_1}
 Let $t\colon \bR^+\to A$ be the map $t(a)=\begin{bmatrix}
1& 0 \\
0 & a 
\end{bmatrix}$ and let $s\colon \bR^+\to A$ be the map $s(a)=\begin{bmatrix}
a& 0 \\
0 & 1 
\end{bmatrix}$. 
\begin{enumerate}
 \item  The map $\iota\colon \bR^+\times \bR^+\cong D\to A$ induces a split surjection 
 \[
 \Exterior^2\bR^+\oplus (\bR^+\otimes \bR^+)\oplus \Exterior^2\bR^+\cong H_2(\mathrm{B}D^{\delta}; \bZ)\xrightarrow{\cong} H_2(\mathrm{B}A^{\delta}; \bZ)\to H_2(R_A; \bZ)\cong \Exterior^2\bR^+,
 \]
 which maps the first summand $ \Exterior^2\bR^+$ isomorphically to $H_2(R_A; \bZ)$. So $\ker \alpha_2$ is isomorphic to $(\bR^+\otimes \bR^+)\oplus \Exterior^2\bR^+$.
\item The map $t$ induces an isomorphism $\bR^+\cong H_1(\mathrm{B} \bR^{+,\delta}; \bZ)\to \ker \alpha_{1}$ and it also induces an injective map $\Exterior^2\bR^+\cong H_2(\mathrm{B} \bR^{+,\delta}; \bZ)\to \ker \alpha_{2}$ which is an isomorphism to the $\Exterior^2\bR^+$ summand of $\ker \alpha_{2}$ in the identification in previous item (1).
\item The following composition 
 \[
  \Exterior^2\bR^+\cong H_2(\mathrm{B}\bR^{+,\delta}; \bZ)\xrightarrow{s_2}H_2(\mathrm{B}A^{\delta}; \bZ)\to H_2(R_A; \bZ)\cong \Exterior^2\bR^+,
 \]
where the first map is induced by $s$ is an isomorphism.   

\end{enumerate}

 \end{cor}
% \begin{cor}\label{iota_2}
 
 %\end{cor}
 
 We also need part of Suslin's calculation (\cite[Theorem 2.1]{Suslin} and \cite[Chapter 6, section 5, Proof of Theorem 5.7]{weibel2013k}) of  $H_2(\mathrm{B}\text{GL}_2(\bR)^{\delta}; \bZ)$ to determine the image of
 \[
 H_2(\mathrm{B}A^{\delta}; \bZ)\to H_2(\mathrm{B}\widetilde{\text{GL}_2^+}(\bR)^{\delta}; \bZ).
 \] 
 
 To find $H_2(\mathrm{B}\widetilde{\text{GL}_2^+}(\bR)^{\delta}; \bZ)$, first note that there is an isomorphism $f\colon \text{SL}_2(\bR)\times \bR^{+}\xrightarrow{\cong} \text{GL}_2^+(\bR)$ where  $$f(A, a)=\begin{bmatrix}
a& 0 \\
0 & a 
\end{bmatrix}\cdot A.$$ 

This isomorphism can be lifted to give an isomorphism $\tilde{f}\colon  \widetilde{\text{SL}_2}(\bR)\times \bR^{+}\xrightarrow{\cong} \widetilde{\text{GL}_2^+}(\bR)$. On the other hand, the groups $\text{SL}_2(\bR)^{\delta} $ and $\widetilde{\text{SL}_2}(\bR)^{\delta}$ are perfect and it is known (\cite[Page 190-191]{MR722372}) that $
 H_2(\mathrm{B}\widetilde{\text{SL}_2}(\bR)^{\delta}; \bZ)\cong K_2(\bR),$ and we have a short exact sequence
 \[
0\to  H_2(\mathrm{B}\widetilde{\text{SL}_2}(\bR)^{\delta}; \bZ)\to  H_2(\mathrm{B}\text{SL}_2(\bR)^{\delta}; \bZ)\to \bZ\to 0.
\]
 Therefore, the K{\"u}nneth formula implies that  we have the isomorphism $H_2(\mathrm{B}\widetilde{\text{GL}_2^+}(\bR)^{\delta}; \bZ)\cong K_2(\bR)\oplus  \Exterior^2 \bR^{+}$ where $K_2(\bR)$ summand comes from the image of $H_2(\mathrm{B}\widetilde{\text{SL}_2}(\bR)^{\delta}; \bZ)\to H_2(\mathrm{B}\widetilde{\text{GL}_2^+}(\bR)^{\delta}; \bZ)$. Also the map
\[
u\colon H_2(\mathrm{B}\widetilde{\text{GL}_2^+}(\bR)^{\delta}; \bZ)\to H_2(\mathrm{B}\text{GL}_2^+(\bR)^{\delta}; \bZ),
\]
is split injective with a cokernel which is isomorphic to $\bZ$. 

The map $\bR^+\times \bR^+\cong D\to A\to \text{GL}_2^+(\bR)$ induces the map
\[
\Exterior^2\bR^+\oplus (\bR^+\otimes \bR^+)\oplus \Exterior^2\bR^+\cong H_2(\mathrm{B} \bR^{+,\delta}\times \mathrm{B} \bR^{+,\delta}; \bZ)\to H_2(\mathrm{B} \text{GL}_2^+(\bR)^{\delta}; \bZ).
\]
Let $\sigma$ be the involution of diagonal entries of $D\cong \bR^+\times \bR^+$. The spectral sequence in the proof of \cite[Chapter 6, Theorem 5.7]{weibel2013k} implies that this map factors through the co-invariants
\[
H_2(\mathrm{B} \bR^{+,\delta}\times \mathrm{B} \bR^{+,\delta}; \bZ)_{\sigma}.
\]
The group $H_2(\mathrm{B} \bR^{+,\delta}\times \mathrm{B} \bR^{+,\delta}; \bZ)_{\sigma}$ is isomorphic to $\Exterior^2\bR^+\oplus \tilde{\Exterior}^2\bR^+$ where $\tilde{\Exterior}^2\bR^+$ denotes the quotient of the group $\bR^+\otimes \bR^+$ by the
subgroup generated by all $a\otimes b+b\otimes a$. The proof of \cite[Chapter 6, Theorem 5.7]{weibel2013k} also implies that the restriction of the map
\[
\Exterior^2\bR^+\oplus \tilde{\Exterior}^2\bR^+\to H_2(\mathrm{B} \text{GL}_2^+(\bR)^{\delta}; \bZ),
\]
 on the summand $\Exterior^2\bR^+$ is injective and maps $\tilde{\Exterior}^2\bR^+$ surjectively to the summand $K_2(\bR)$ in $H_2(\mathrm{B} \text{GL}_2^+(\bR)^{\delta}; \bZ)$. So we summarize what we need from Suslin's calculation in the following lemma.
\begin{lem}\label{Suslin}
The map
 \[
  \bR^+\otimes \bR^+\to H_2(\mathrm{B} \bR^{+,\delta}\times \mathrm{B} \bR^{+,\delta}; \bZ)\to H_2(\mathrm{B} \text{GL}_2^+(\bR)^{\delta}; \bZ),
  \]
  surjects to the image of $K_2(\bR)\cong H_2(\mathrm{B}\widetilde{\text{SL}_2}(\bR)^{\delta}; \bZ)\to H_2(\mathrm{B} \text{GL}_2^+(\bR)^{\delta}; \bZ)$.
\end{lem}
% via the map $e$ in the long exact sequence \ref{eq5}. So we need to show that $\ker(\alpha_2)\cong (\bR^+\otimes\bR^+)\oplus \Exterior^2\bR^+$ maps surjectivey to the summand $\text{Im}(p)\cong K_2(\bR)\oplus  \Exterior^2 \bR^{+}$.

%\begin{tikzcd}
%&H_2(\mathrm{B}A^{\delta})\arrow[r]&H_2(R_A)\oplus H_2(\mathrm{B}\widetilde{\text{GL}_2^+}(\bR)^{\delta})
%\arrow[r]\arrow[d,phantom,""{coordinate, name=Z}]
%&
%H_2(X)\arrow[dlll,"d",rounded corners,to path={ --([xshift=2ex]\tikztostart.east)|-(Z)[near end]
%\tikztonodes-|([xshift=-2ex]\tikztotarget.west)--(\tikztotarget)}]\\H_1(\mathrm{B}A^{\delta})\arrow[r]&H_1(R_A)\oplus H_1(\mathrm{B}\widetilde{\text{GL}_2^+}(\bR)^{\delta})\arrow[r]&H_1(X)\arrow[two heads,r,"e"]&
%0,
%\end{tikzcd}
 
\begin{proof}[Proof of \Cref{X}] Since $X$ is simply connected, to prove that it is $2$-connected, we need to show that $H_2(X;\bZ)=0$.  The homotopy pushout in diagram \ref{2} gives the Mayer-Vietoris sequence
\begin{equation}\label{eq:8}
 \begin{tikzcd}
  H_2(\mathrm{B}A^{\delta}; \bZ) \rar{i_2} & H_2(R_A; \bZ)\oplus H_2(\mathrm{B}\widetilde{\text{GL}_2^+}(\bR)^{\delta}; \bZ) \rar
             \ar[draw=none]{d}[name=X, anchor=center]{}
    & H_2(X; \bZ) \ar[rounded corners,
            to path={ -- ([xshift=2ex]\tikztostart.east)
                      |- (X.center) \tikztonodes
                      -| ([xshift=-2ex]\tikztotarget.west)
                      -- (\tikztotarget)}]{dll}[at end]{\delta} \\      
  H_1(\mathrm{B}A^{\delta}; \bZ) \rar{i_1} & H_1(R_A; \bZ)\oplus H_1(\mathrm{B}\widetilde{\text{GL}_2^+}(\bR)^{\delta}; \bZ) \rar & H_1(X; \bZ)=0
\end{tikzcd}
\end{equation}
First, we observe that $i_1$ is an isomorphism. From \Cref{iota_1}, we know that the kernel of the map
\[
\alpha_1\colon H_1(\mathrm{B}A^{\delta}; \bZ)\to H_1(R_A; \bZ),
\]
is given by the image of $H_1(\mathrm{B}\text{Aff}^+(\bR)^{\delta}; \bZ)\to H_1(\mathrm{B}A^{\delta}; \bZ)$. So to prove that $i_1$ is an isomorphism, it is enough to show the composition 
\[
\mathrm{B}\text{Aff}^+(\bR)^{\delta}\to \mathrm{B}A^{\delta}\to \mathrm{B}\widetilde{\text{GL}_2^+}(\bR)^{\delta},
\]
induces an isomorphism on the first homology. On the other hand, using the isomorphism $f\colon \text{SL}_2(\bR)\times \bR^{+}\xrightarrow{\cong} \text{GL}_2^+(\bR)$, we know that 
\[
H_1(\mathrm{B}\text{Aff}^+(\bR)^{\delta}; \bZ)\to H_1(\mathrm{B}\text{GL}_2^+(\bR)^{\delta}; \bZ),
\]
is an isomorphism. Hence, to prove that $i_1$ is an isomorphism, it is enough to show that 
\[
H_1(\mathrm{B}\widetilde{\text{GL}_2^+}(\bR)^{\delta}; \bZ)\to H_1(\mathrm{B}\text{GL}_2^+(\bR)^{\delta}; \bZ),
\]
is an isomorphism. The Serre spectral sequence for the fibration
%First, we know by the above discussion that $H_1(X)=0$ and $H_1(R_A)=\bR^+$. To find $H_1(\mathrm{B}\widetilde{\text{GL}_2^+}(\bR)^{\delta})$, we consider the Serre-Hochschild spectral sequence for the central extension
\begin{equation}\label{ss}
\bS^1\to \mathrm{B}\widetilde{\text{GL}_2^+}(\bR)^{\delta}\to \mathrm{B}\text{GL}_2^+(\bR)^{\delta},
\end{equation}
gives the long exact sequence
\begin{equation}\label{eq5}
 \begin{tikzcd}
  H_2(\mathrm{B}\widetilde{\text{GL}_2^+}(\bR)^{\delta}; \bZ) \rar{u} & H_2(\mathrm{B}\text{GL}_2^+(\bR)^{\delta}; \bZ) \rar{e}
             \ar[draw=none]{d}[name=X, anchor=center]{}
    & \bZ \ar[rounded corners,
            to path={ -- ([xshift=2ex]\tikztostart.east)
                      |- (X.center) \tikztonodes
                      -| ([xshift=-2ex]\tikztotarget.west)
                      -- (\tikztotarget)}]{dll}[at end]{} \\      
  H_1(\mathrm{B}\widetilde{\text{GL}_2^+}(\bR)^{\delta}; \bZ) \rar{v} & H_1(\mathrm{B}\text{GL}_2^+(\bR)^{\delta}; \bZ) \rar & 0.
\end{tikzcd}
\end{equation}
The map $e$ is the Euler class for flat $\text{GL}_2^+(\bR)$-bundles over surfaces. By Minlor's theorem (\cite[Theorem 2]{MR0095518}), the Euler number of flat $\text{GL}_2^+(\bR)$-bundles over a surface of genus $g$ can take any value between $-g+1$ and $g-1$. So by varying $g$, we conclude that $e$ is surjective (it is in fact split surjective). Therefore, the map $v$ is an isomorphism.

%which is classified by the Euler class in $H^2(\mathrm{B}\text{GL}_2^+(\bR)^{\delta};\bZ)$. But the pull-back of this Euler class in $H^2(\mathrm{B}\widetilde{\text{GL}_2^+}(\bR)^{\delta};\bZ)$ is zero. Therefore, we obtain that $$H_1(\mathrm{B}\widetilde{\text{GL}_2^+}(\bR)^{\delta};\bZ)\to H_1(\mathrm{B}\text{GL}_2^+(\bR)^{\delta};\bZ),$$ is an isomorphism. On the other hand, the short exact sequence $\text{SL}_2(\bR)\to \text{GL}_2^+(\bR)^{\delta}\to \bR^+$ implies that $H_1(\mathrm{B}\text{GL}_2^+(\bR)^{\delta};\bZ)\cong \bR^+$ since $\text{SL}_2(\bR)$ is perfect.

 So to prove that $H_2(X;\bZ)=0$, it is enough to show that 
\begin{equation}\label{eq6}
i_2\colon H_2(\mathrm{B}A^{\delta}; \bZ)\to H_2(R_A; \bZ)\oplus H_2(\mathrm{B}\widetilde{\text{GL}_2^+}(\bR)^{\delta}; \bZ),
\end{equation}
is a surjection.  In \Cref{iota_1}, we determined the kernel of the split surjective map
\[
\alpha_2\colon H_2(\mathrm{B}A^{\delta}; \bZ)\to H_2(R_A; \bZ).
\]
Hence, it is enough to show that $\ker(\alpha_2)\to H_2(\mathrm{B}\widetilde{\text{GL}_2^+}(\bR)^{\delta}; \bZ)$ is surjective. 
%Note that the spectral sequence for the central extension \ref{ss} implies that 
% \[
% H_2(\mathrm{B}\widetilde{\text{GL}_2^+}(\bR)^{\delta})\to H_2(\mathrm{B}\text{GL}_2^+(\bR)^{\delta}),
% \]
% is injective. So it suffices to show that 
% \begin{equation}\label{k}
%k\colon H_2(\mathrm{B}A^{\delta})\to H_2(R_A)\oplus H_2(\mathrm{B}\text{GL}_2^+(\bR)^{\delta}),
%\end{equation}
%is a surjection. 

% To find $H_2(\mathrm{B}\widetilde{\text{GL}_2^+}(\bR)^{\delta}; \bZ)$, first note that there is an isomorphism $f\colon \text{SL}_2(\bR)\times \bR^{+}\xrightarrow{\cong} \text{GL}_2^+(\bR)$ where  $$f(a, A)=\begin{bmatrix}
%a& 0 \\
%0 & a 
%\end{bmatrix}\cdot A.$$ 
%
%This isomorphism can be lifted to give an isomorphism $\tilde{f}\colon  \widetilde{\text{SL}_2}(\bR)\times \bR^{+}\xrightarrow{\cong} \widetilde{\text{GL}_2^+}(\bR)$. On the other hand, The groups $\text{SL}_2(\bR)^{\delta} $ and $\widetilde{\text{SL}_2}(\bR)^{\delta}$ are perfect and it is known (see \cite[Page 190-191]{MR722372}) that 
% \[
% H_2(\mathrm{B}\widetilde{\text{SL}_2}(\bR)^{\delta}; \bZ)\cong K_2(\bR),
% \]
% \[
% H_2(\mathrm{B}\text{SL}_2(\bR)^{\delta}; \bZ)\cong \bZ\oplus K_2(\bR).
% \]
%Therefore, the Kunneth formula implies that  $H_2(\mathrm{B}\widetilde{\text{GL}_2^+}(\bR)^{\delta})\cong K_2(\bR)\oplus  \Exterior^2 \bR^{+}$ and the map
Recall that 
\[
u\colon H_2(\mathrm{B}\widetilde{\text{GL}_2^+}(\bR)^{\delta}; \bZ)\to H_2(\mathrm{B}\text{GL}_2^+(\bR)^{\delta}; \bZ),
\]
is split injective where $\text{Im}(u)\cong K_2(\bR)\oplus  \Exterior^2 \bR^{+}$ and by the above discussion the cokernel is isomorphic to $\bZ$ via the map $e$ in the long exact sequence \ref{eq5}. So we need to show that $\ker(\alpha_2)\cong (\bR^+\otimes\bR^+)\oplus \Exterior^2\bR^+$ maps surjectivey to the summand $\text{Im}(p)$.

Recall from \Cref{iota_1} that the map $t\colon \bR^+\to A$ given by $t(a)=\begin{bmatrix}
1& 0 \\
0 & a 
\end{bmatrix}$ induces a map on the second homology groups
\[
 \Exterior^2 \bR^{+}\to \ker(\alpha_2),
\]
which is isomorphic to the summand $ \Exterior^2 \bR^{+}$ in $\ker(\alpha_2)$ in \Cref{iota_1}. On other hand, under the isomorphism $f\colon \text{SL}_2(\bR)\times \bR^{+}\xrightarrow{\cong} \text{GL}_2^+(\bR)$, the matrix $t(a)$ comes from $(\text{Id}, \sqrt{a})$. Given that the square root is an isomorphism of $\bR^+$, we obtain that the composition of maps
\[
H_2(\mathrm{B}\bR^{+, \delta}; \bZ)\xrightarrow{t_*} H_2(\mathrm{B}A^{\delta}; \bZ)\to H_2(\mathrm{B}\text{GL}_2^+(\bR)^{\delta}; \bZ)\cong H_2(\mathrm{B}\text{SL}_2(\bR)^{\delta}; \bZ)\oplus  \Exterior^2 \bR^{+},
\]
is injective and is isomorphic to the summand $ \Exterior^2 \bR^{+}$. Hence, to finish the proof of surjectivity of 
\[
 (\bR^+\otimes\bR^+)\oplus \Exterior^2\bR^+\cong \ker(\alpha_2)\to \text{Im}(p)\cong K_2(\bR)\oplus  \Exterior^2\bR^+,
\]
 it is enough to prove that the summand $\bR^+\otimes \bR^+$ in $\ker(\alpha_2)$  maps surjectively to $K_2(\bR)$. Recall that the summand  $\bR^+\otimes \bR^+$ in \Cref{iota_1} is induced by embedding of diagonal matrices and using the Kenneth formula
 \[
  \bR^+\otimes \bR^+\to H_2(\mathrm{B} \bR^{+,\delta}\times \mathrm{B} \bR^{+,\delta}; \bZ)\to H_2(\mathrm{B}A^{\delta}; \bZ).
  %\to H_2(\mathrm{B}\text{GL}_2^+(\bR)^{\delta}; \bZ)\cong \bZ\oplus K_2(\bR)\oplus  \Exterior^2\bR^+.
 \]
So from \Cref{Suslin}, it follows that the summand $\bR^+\otimes \bR^+$ in $\ker(\alpha_2)$  maps surjectively to $K_2(\bR)$.
\end{proof}

Recall that we have  natural maps  $\nu\colon \mathrm{B}\Gamma_2^{PL}\to \mathrm{B}\text{PL}^+(\bR^2)$ and $\zeta\colon \mathrm{B}\text{PL}^+(\bR^2)\xrightarrow{\simeq}\mathrm{B}\text{Homeo}^+(\bR^2)\simeq \mathrm{B}S^1$. So they induce a map
$$\psi\colon rX\to \mathrm{B}\text{Homeo}^+(\bR^2).$$
 We think of the map $\psi$ as the map that classifies the normal bundle to codimension $2$ PL Haefliger structures as $\bR^2$-bundles. Therefore, to prove \Cref{PL} which says that $\overline{\mathrm{B}\Gamma}_2^{\textnormal{\text{PL}}}$ is $4$-connected, it is enough to prove the following.

%Now let us also model the map $\nu\colon \mathrm{B}\Gamma_2^{PL}\to \mathrm{B}S^1$ using Greenberg's construction. Consider the natural map $LX\hcoker S^1\to \mathrm{B}S^1$ and the trivial map $X\to \mathrm{B}S^1$. These two maps induce a map $$\alpha\colon rX\to \mathrm{B}S^1.$$ So to prove\Cref{PL} which says that $\overline{\mathrm{B}\Gamma}_2^{\textnormal{\text{PL}}}$ is $4$-connec ted, it is enough to prove the following.
\begin{thm}\label{integer}
The map $\psi$ induces an isomorphism on $H_*(-;\bZ)$ for $*\leq 4$.
\end{thm}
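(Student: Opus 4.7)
The plan is to show $\alpha_*\colon H_*(rX;\bZ) \to H_*(\mathrm{B}S^1;\bZ)$ is an isomorphism in degrees $\leq 4$ by running the Mayer-Vietoris sequence for the pushout \ref{3},
\[
\cdots \to H_n(LX) \xrightarrow{(e_*, \iota_*)} H_n(X) \oplus H_n(LX \hcoker S^1) \to H_n(rX) \to H_{n-1}(LX) \to \cdots,
\]
in parallel with the Serre spectral sequence for the Borel fibration $LX \to LX \hcoker S^1 \to \mathrm{B}S^1$. By construction $\alpha$ is constant on the $X$-summand and is the fibration projection on the $LX \hcoker S^1$-summand, so $\alpha_*$ kills the $H_*(X)$-factor and identifies $H_*(LX \hcoker S^1) \to H_*(\mathrm{B}S^1)$ with the edge map onto the base row $E^{\infty}_{*, 0}$. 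Since \Cref{X} gives that $X$ is $2$-connected, the evaluation fibration $\Omega X \to LX \to X$ forces $LX$ to be $1$-connected, so $LX \hcoker S^1$ and $rX$ are simply connected and the statement is equivalent to an integral homology isomorphism in the given range.

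First I would compute $H_*(LX)$ in low degrees using the Serre spectral sequence of the evaluation fibration. The constant-loop section $s\colon X \to LX$ kills differentials out of the base row and yields a splitting $H_*(LX) \cong H_*(X) \oplus K_*$ in which $K_*$ records the $\Omega X$-contribution. I would then compute $H_*(LX \hcoker S^1)$ via the Borel Serre spectral sequence, whose $d_2$ differential is (up to sign) the Connes $B$-operator on $H_*(LX)$ and whose fiber-inclusion edge map recovers $\iota_*$. The upshot will be that in the Mayer-Vietoris sequence the image of $H_*(LX)$ under $(e_*,\iota_*)$ saturates the $H_*(X)$-factor (via $e_*$) and the $E^{\infty}_{0,*}$-column (via $\iota_*$), leaving the base row $E^{\infty}_{*,0}=H_*(\mathrm{B}S^1)$ as the only surviving contribution to $H_*(rX)$; the map $\alpha_*$ realizes this identification.

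The remaining input is $H_*(X)$ for $3 \leq * \leq 4$, which I would obtain by continuing the Mayer-Vietoris analysis of pushout \ref{2} initiated in the proof of \Cref{X}. The ingredients are: the Nesterenko-Suslin homology isomorphism reducing $H_*(\mathrm{B}A^\delta)$ and $H_*(\mathrm{B}(A \times_{\bR^+} A)^\delta)$ to exterior algebras on $\bR$; Mayer-Vietoris for pushout \ref{1} to produce $H_*(R_A)$; Suslin's computation of $H_*(\mathrm{B}\text{GL}_2^+(\bR)^\delta)$ in low degrees in terms of $K_2(\bR)$ and $K_3^{\textrm{ind}}(\bR)$, both of which are uniquely divisible; and the Hochschild-Serre spectral sequence for the central extension $\bZ \to \widetilde{\text{GL}_2^+}(\bR)^\delta \to \text{GL}_2^+(\bR)^\delta$, whose $d_2$ is cap product with the Euler class.

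The hardest step will be the degree-$4$ analysis. One has to check that any class arising in $H_3(X)$ or $H_4(X)$ (coming from Suslin's $K_3^{\textrm{ind}}(\bR)$ and the Euler-class-driven differentials in the Hochschild-Serre sequence) is exactly the image under $e_*$ of a class in $H_*(LX)$ whose image under $\iota_*$ lies in the corresponding $E^{\infty}_{0,*}$-piece of $H_*(LX \hcoker S^1)$, with the compatibility mediated by the Connes $B$-operator. Unique divisibility of $K_2(\bR)$ and $K_3^{\textrm{ind}}(\bR)$ is what lets one avoid the $\bF_p$-torsion obstructions that would otherwise spoil the desired isomorphism in degree $3$ or $4$.
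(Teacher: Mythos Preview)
Your overall framework---running the Mayer--Vietoris sequence for pushout \ref{3} in tandem with the Serre spectral sequence for the Borel fibration, and identifying $\alpha_*$ with projection to the base row---matches the paper's. But you then plan an unnecessary and much harder detour: computing $H_3(X)$ and $H_4(X)$ explicitly via pushout \ref{2}, Suslin's $K_3^{\mathrm{ind}}(\bR)$, and Hochschild--Serre for the central extension. The paper never does this. Its proof uses \emph{only} that $X$ is $2$-connected (\Cref{X}), together with purely formal facts about the free loop space: the constant-loop section of $LX\to X$, the fixed-point section of $LX\hcoker S^1\to\mathrm{B}S^1$, and Hurewicz. From $2$-connectivity one gets $H_1(LX)=0$; the differential $d_2\colon H_i(LX)\to H_{i+1}(LX)$ is injective in the relevant range because on classes coming from $\Omega X$ it is the loop-suspension into the $H_*(X)$-summand of $H_*(LX)$; and $e_*\circ d_2$ surjects onto $H_3(X)$ and $H_4(X)$ because Hurewicz makes $H_{*-1}(\Omega X)\to H_*(X)$ surjective for $*\le 4$. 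These facts let one read off the Mayer--Vietoris cokernels and kernels directly, without ever knowing what $H_3(X)$ or $H_4(X)$ actually are.

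Your proposed route is not just longer but probably infeasible as stated: extracting $H_4(X)$ would require $H_4(\mathrm{B}\text{GL}_2^+(\bR)^\delta)$ and $H_4(R_A)$ and the maps between them, which goes beyond what Suslin's paper supplies. The appeal to unique divisibility of $K_2(\bR)$ and $K_3^{\mathrm{ind}}(\bR)$ is a red herring---no such input is needed once you see the argument is formal in the $2$-connectivity of $X$. Drop the plan to compute $H_*(X)$ for $*\ge 3$; everything you need about $H_*(X)$ and $H_*(LX)$ in degrees $\le 4$ already follows from $\pi_i(X)=0$ for $i\le 2$.
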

We need another preliminary lemma. In Greenberg's homotopy push-out diagram \ref{3}, there is a map $q\colon LX\hcoker S^1\to rX$ and also there is a natural map $p\colon LX\hcoker S^1\to \mathrm{B}S^1$ that classifies the universal circle bundle over the homotopy quotient $LX\hcoker S^1$. Since $S^1\hookrightarrow \text{Homeo}_0(S^1)$ is homotopy equivalence, we shall consider the following equivalent models for these maps
\[
q\colon LX\hcoker  \text{Homeo}_0(S^1)\to rX\simeq  \mathrm{B}\Gamma_2^{\textnormal{\text{PL}}},
\]
\[
p\colon LX\hcoker  \text{Homeo}_0(S^1)\to \mathrm{B} \text{Homeo}_0(S^1).
\]
There is also the composition $\text{Homeo}_0(S^1)\to \text{Homeo}_0(D^2)\to \text{Homeo}^+(\bR^2)$ where the first map is the Alexander cone construction and the second map is the restriction to the identity. This inclusion  induces a weak homotopy equivalence $$\iota\colon\mathrm{B}\text{Homeo}_0(S^1)\xrightarrow{\simeq} \mathrm{B}\text{Homeo}^+(\bR^2).$$
\begin{lem}\label{compatible}
The maps $\iota\circ p$ and $\psi\circ q$ induce the same map on homology.
\end{lem}
\begin{proof} This is already implicit in Greenberg's paper (\cite{MR1200422}) but for the convenience of the reader, we shall first recall the relevant object for this proof. As in Greenberg's paper (\cite[Page 188]{MR1200422}), let $P_0$ be the group of germs of orientation preserving PL homeomorphisms of $\bR^2$ that fix the origin. Ghys--Sergiescu (\cite[Section 2]{ghys1987groupe} and \cite[Theorem 2.25 and Corollary 2.26]{MR1200422}) proved a general version of the Mather-Thurston homology isomorphism theorem for certain groupoids on the circle. As a result, there is a map
\[
f\colon \mathrm{B}P_0\to LX\hcoker \text{Homeo}_0(S^1),
\]
induces a homology isomorphism. To prove the lemma, we use Greenberg's description of Ghys--Sergiescu's theorem to show that the two maps
\[
\iota\circ p\circ f\colon \mathrm{B}P_0\to \mathrm{B}\text{Homeo}^+(\bR^2),
\]
\[
\psi\circ q\circ f\colon \mathrm{B}P_0\to \mathrm{B}\text{Homeo}^+(\bR^2),
\]
induce isomorphic $\bR^2$-bundles over $\mathrm{B}P_0$. 

By Greenberg's description (\cite[Section 2.22]{MR1200422}), the composition $q\circ f\colon  \mathrm{B}P_0\to \mathrm{B}\Gamma_2^{\textnormal{\text{PL}}}$ is induced by the inclusion of $P_0$ as the group of germs into the groupoid $\Gamma_2^{\textnormal{\text{PL}}}$. One can canonically extend each germ in $P_0$ to a PL homeomorphism of $\bR^2$. So there is a natural action of $P_0$ on $\bR^2$. Therefore, the map $\psi\circ q\circ f$ 
\[
\mathrm{B}P_0\xrightarrow{} \mathrm{B}\Gamma_2^{\textnormal{\text{PL}}}\xrightarrow{\nu} \mathrm{B}\text{PL}^+(\bR^2)\xrightarrow{}\mathrm{B}\text{Homeo}^+(\bR^2),
\]
classifies the $\bR^2$-bundle on $\mathrm{B}P_0$ induced by the action of $P_0$ on $\bR^2$. 

%we can consider $P_0$ as a subgroup of $\text{Homeo}^+(\bR^2)^{\delta}$. Therefore, we also have a map
%\[
%\mathrm{B}P_0\to \mathrm{B}\text{Homeo}^+(\bR^2)\simeq \mathrm{B}S^1.
%\]
%Given that $P_0$ as the group of germs maps into the groupoid $\Gamma_2^{\textnormal{\text{PL}}}$, the above map factors as follows
%\[
%\mathrm{B}P_0\xrightarrow{r} \mathrm{B}\Gamma_2^{\textnormal{\text{PL}}}\xrightarrow{\nu} \mathrm{B}\text{PL}^+(\bR^2)\xrightarrow{\beta}\mathrm{B}\text{Homeo}^+(\bR^2)\simeq \mathrm{B}S^1. 
%\]
On the other hand, $P_0$ acts on rays out of the origin. So $P_0$ also maps into $\text{PP}(S^1)$ the group of orientation-preserving piecewise projective homeomorphisms of $S^1$. In particular, it is a subgroup of orientation-preserving homeomorphisms of the circle. The map $p\circ f$
\[
\mathrm{B}P_0\to \mathrm{B}\text{Homeo}_0(S^1),
\]
classifies the natural circle bundle over $\mathrm{B}P_0$ induced by the action of $P_0$ on $S^1$. Therefore, the map $\iota\circ p\circ f$ classifies the Euclidean $\bR^2$-bundle induced by the natural action of $P_0$ on $\bR^2$.%Therefore, we have a map 
\end{proof}
%We also need the following preliminary lemma to prove \Cref{integer}.

Let $\text{ev}\colon LX\to X$ be the map induced by evaluating loops at the base point $1$ of the unit circle in the complex plane. The circle action $\eta\colon S^1\times LX\to LX$ sends the pair $(s, \gamma(t))$ where $\gamma(t)$ is a free loop in $X$ to the loop $\gamma(st)$. The map $\eta$ induces the map
\[
\Delta_*\colon H_*(LX; \bZ)\to H_{*+1}(LX; \bZ).
\]
For each positive integer $k$, let $h_k\colon \pi_k(\Omega X)\to H_{k+1}(X; \bZ)$ be the map that sends the homotopy class of $f\colon S^k\to \Omega X$ to $F_*([S^1\times S^k])$  where $F\colon S^1\times S^k\to X$ is the map induced by the adjoint of $f$ (it is the adjoint of $f$ composed with swapping $S^1$ and $S^k$ factors). 
\begin{lem}\label{comm}We have a commutative diagram

\[
 \begin{tikzpicture}[node distance=1.5cm, auto]
  \node (A) {$ \pi_*(\Omega X)$};
  \node (B) [right of=A, node distance=6cm] {$H_{*+1}(X; \bZ)$};
  \node (C) [below of=A]{$H_*(\Omega X; \bZ)$};
  \node (D) [right of=C, node distance=3cm]{$H_*(LX; \bZ)$};
    \node (E) [below of=B]{$H_{*+1}(LX; \bZ),$};

  \draw [->] (A) to node {$h_*$}(B);
    \draw [->] (A) to node {$\text{H}$}(C);
  \draw [->] (C) to node {$G$}(D);
  \draw [->] (E) to node {$\text{ev}_*$}(B);
    \draw [->] (D) to node {$\Delta_*$}(E);
\end{tikzpicture}
\]
where the map $H$ is the Hurewicz map and the map $G$ is induced by the inclusion $\Omega X\to LX$. 
\end{lem}
\begin{proof}
Let $f\colon S^k\to \Omega X$ be an element in $\pi_k(\Omega X)$ and let $\tilde{f}$ be $G\circ H(f)\in H_k(LX;\bZ)$. Then $\Delta_k(\tilde{f})$ is defined to be the map
\[
\Delta_k(\tilde{f})\colon S^1\times S^k\to LX,
\]
which sends the pair $(s, x)$ to to the action of $s$ on the loop $f(x)(t)$ which is $f(x)(st)$. The evaluation map evaluates this loop at $t=1$ which gives the same map as the adjoint $F\colon S^1\times S^k\to X$. Hence, we have $\text{ev}_k\circ \Delta_k(\tilde{f})=h_k(f)$.
\end{proof}
\begin{cor}\label{Delta}
Let $X$ be $2$-connected space and \begin{itemize}
\item The map $\Delta_2$ is injective and $\Delta_2(H_2(LX; \bZ))$ maps isomorphically to $H_3(X; \bZ)$ via the evaluation map $\text{ev}$.
\item $\Delta_3(H_3(LX; \bZ))$ maps surjectively to $H_4(X; \bZ)$ via the evaluation map $\text{ev}$.
\end{itemize}

\end{cor}
\begin{proof}
Since $X$ is $2$-connected, the Hurewicz map $\pi_3(X)\to H_3(X;\bZ)$ is an isomorphism and also $\pi_4(X)\to H_4(X; \bZ)$ is surjective. On the other hand, $LX$ is also simply connected, therefore we have the isomorphisms
\[
\pi_2(\Omega X)\xrightarrow{\cong} H_2(\Omega X; \bZ)\xrightarrow{\cong} H_2(LX; \bZ),
\]
where $\Omega X$ is the based loop space on $X$. Also, note the map 
\[
h_2\colon \pi_2(\Omega X)\to H_3(X; \bZ),
\]
is an isomorphism.
From \Cref{comm}, we know that $\text{ev}_2\circ \Delta_2\colon\pi_2(\Omega X)\to H_3(X; \bZ)$ is the same map as $h_2$ which proves the first statement.  

Since $\Omega X$ is simply connected, the Hurewicz map
\[
\pi_3(\Omega X)\to H_3(\Omega X; \bZ),
\]
is surjective. So to prove the second statement, it is enough to show that the composition 
\begin{equation}\label{sur}
\pi_3(\Omega X)\xrightarrow{\cong} H_3(\Omega X; \bZ)\to H_3(LX; \bZ)\xrightarrow{\Delta_3}H_4(LX; \bZ)\xrightarrow{\text{ev}} H_4(X; \bZ),
\end{equation}
is surjective. But again by \Cref{comm} the above composition is the same as the natural map
\[
h_3\colon\pi_3(\Omega X)\to H_4(X; \bZ),
\]
that sends the homotopy class of $f\colon S^3\to \Omega X$ to $F_*([S^1\times S^3])$ where $F\colon S^1\times S^3\to X$ is the map induced by the adjoint of $f$. Now since $X$ is $2$-connected, the map $h_3$ is surjective. Therefore, the composition \ref{sur} is also surjective. 
\end{proof}
\begin{proof}[Proof of \Cref{integer}]
Recall from the introduction that the space $\overline{\mathrm{B}\Gamma}_2^{\textnormal{\text{PL}}}$  which is weakly equivalent to the homotopy fiber of the map
$$\psi\colon rX\to \mathrm{B}\text{Homeo}^+(\bR^2),$$
is known to be at least $2$-connected. So the map $\psi$ induces isomorphisms on $H_*(-;\bZ)$ for $*\leq 2$. Hence, we need to show two things; one is that $H_3(rX; \bZ)=0$ and the other is
$$\psi_4\colon H_4(rX; \bZ)\to H_4(\mathrm{B}\text{Homeo}^+(\bR^2); \bZ)\cong \bZ,$$
is an isomorphism.  First, note that  the Mayer-Vietoris sequence for the pushout \ref{3} gives
\[
H_i(LX)\to H_i(X)\oplus H_i(LX\hcoker S^1)\to H_i(rX)\to H_{i-1}(LX)\to H_{i-1}(X)\oplus H_{i-1}(LX\hcoker S^1).
\]
To compute $H_*(rX;\bZ)$ for $*\leq 4$, we use that $X$ is $2$-connected and the fact that fibrations
\[
\Omega X\to LX\xrightarrow{\text{ev}} X,
\]
\begin{equation}\label{LX}
LX\to LX\hcoker S^1\xrightarrow{p} \mathrm{B}S^1,
\end{equation}
have sections. The first fibration has a section by considering constant loops and the second fibration has a section because the action of $S^1$ has fixed points i.e. the constant loops. Therefore, $H_*(LX)\xrightarrow{\text{ev}_*}H_*(X)$ is surjective and so is $H_*(LX\hcoker S^1)\to H_*(\mathrm{B}S^1)$ and since they have  sections, $H_*(X)$ and $H_*(\mathrm{B}S^1)$ split off as summands from $H_*(LX)$ and $H_*(LX\hcoker S^1)$ respectively. 

%\noindent{\bf Case 1:} We show that $\alpha$ induces an isomorphism on $H_3(-;\bZ)$. For this, we only use that $X$ is simply connected. To find the cokernel and kernel of the map $H_*(LX)\to H_*(LX\hcoker S^1)$ in low degrees, we shall consider the Serre spectral sequence for the fibration \ref{LX}.

\begin{figure}[h]
\[
\begin{tikzpicture}
\draw [<-] (-0.6,4.7) -- (-0.6,0.2);
\draw [->] (-1,0.6)-- (9.6,0.6);
%\draw [ultra thick] (0,1.5) -- (3,0);
%\draw [dashed] (0,1.8) -- (1.8,0);
\node  at (0.2,0.9) {\small$\bZ$};
\node  at (0.2,0.3) {$0$};
\node at (2.2,0.9) {\small$0$};
\node at (2.2,0.3) {\small$1$};

%\node at (2.5,0.9) {\small$0$};
%\node at (2.5,0.3) {\small$2$};

\node at (4.2,0.9) {\small$\bZ$};
\node at (4.2,0.3) {\small$2$};
\node at (6.2,0.3) {\small$3$};
\node at (8.2,0.3) {\small$4$};

\node at (6.2,0.9) {\small$0$};
\node at (8.2,0.9) {\small$\bZ$};

\node at (6.2,1.5) {\small$0$};
\node at (8.2,1.5) {\small$0$};

%\node at (5.5,0.2) {\small$0$};
%\node at (5.5,-0.2) {\small$4$};

%\node at (7.2,0.2) {\small$0$};
%\node at (7.2,-0.2) {\small$5$};

%\node at (10.2,0.2) {\small$\bZ\oplus\bR\oplus S^2_{\bQ}\bR\oplus S^3_{\bQ}\bR$};
%\node at (10.2,-0.2) {\small$6$};
\node at (9.8,0.3) {\small$p$};

\node at (0.2,1.5) {\small$0$};
\node at (-1,1.5) {\small$1$};
\node at (-1,0.9) {\small$0$};

\node at (0.2,2.2) {\small$H_2(LX)$};
\node at (-1,2.2) {\small$2$};

%\node at (0.2,3.2) {\small$0$};
%\node at (-1,3.2) {\small$k-1$};

%\node at (0.2,4.2) {\small$*$};
%\node at (-1,4.2) {\small$k$};

\node at (0.2,5.2) {\small$$};
%\node at (-1,5.2) {\small$5$};
\node at (-1,2.9) {\small$3$};
\node at (-1, 3.6) {\small$4$};
\node at (0.2, 3.6) {\small$H_4(LX)$};
\node at (0.2,2.9) {\small$H_3(LX)$};
\node at (-1,5) {\small$q$};

\node at (2.2,2.9) {\small$0$};

\node at (2.2,2.2) {\small$0$};
\node at (4.2,2.2) {\small$H_2(LX)$};
%\node at (3.7,2.2) {\small$\bigoplus_{C_1(\bR)}A^4$};
%\node at (5.5,2.2) {\small$0$};
%\node at (7.2,2.2) {\small$0$};
%\node at (10.2,2.2) {\small$0$};
%\node at (1.2,3.2) {\small$*$};
%\node at (2.5,3.2) {\small$*$};
%\node at (1.2,4.2) {\small$*$};

\node at (2.2,1.5) {\small$0$};
\node at (4.2,1.5) {\small$0$};
%\node at (3.7,1.5) {\small$0$};
%\node at (5.5,1.2) {\small$\vdots$};
%\node at (7.2,1.2) {\small$0$};
%\node at (10.2,1.2) {\small$0$};
%\draw [<-,red] (0.31,0.85)--(1.0,.85);
%\draw [<-,red] (1.33,0.85)--(2.29,.85);
%\draw [<-,red] (2.69,0.85)--(3.54,.85);

%\draw [<-] (0.9,2.2)--(1.4,2.2);

\draw [<-] (0.9,2.9)--(3.4,2.3);
\draw [<-] (0.9,2.2)--(3.4,1.6);
\draw [<-] (0.9,1.5)--(3.4,0.9);

\draw [<-] (4.9,2.2)--(7.4,1.6);
\draw [<-] (4.9,1.5)--(7.4,0.9);
%\draw [<-] (0.9,.9)--(1.4,.9);
%\draw [<-,red] (3.6,2.1)--(4.6,2.1);
%\draw [<-] (2.84,.9)--(3.4,.9);
%\draw [->,red] (3.6,2.3)--(0.3,4);
%\draw [->,red] (2.3,3.2)--(0.3,4.2);
%\draw [->,red] (1.1,4.2)--(0.41,4.2);
%\node at (0.7,1.1) {\tiny$\textcolor{red}{0}$};
%\node at (1.9,1.1) {\tiny$\textcolor{red}{id}$};
%\node at (3.1,1.1) {\tiny$\textcolor{red}{0}$};
%\node at (.8,4.38) {\small$\textcolor{red}{d_1}$};

\end{tikzpicture}\]
\caption{ The second page of the Serre spectral sequence for the fibration $LX\to LX\hcoker S^1\to \mathrm{B}S^1$.}\label{s''}
\end{figure}
From the Serre spectral sequence for the fibration \ref{LX}, we see that $H_2(LX; \bZ)\to H_2(LX\hcoker S^1; \bZ)$ is injective. So to show that $H_3(rX; \bZ)=0$, it is enough to prove that 
\begin{equation}\label{eq1}
H_3(LX)\to H_3(X)\oplus H_3(LX\hcoker S^1),
\end{equation}
is surjective.

Note that the differentials out of  $\bZ$'s in the $0$-th row are trivial because of the existence of the section for the map $p$ in the fibration \ref{LX}. And it is standard that the differentials
\[
d_2\colon H_i(LX)\to H_{i+1}(LX),
\]
are the same as the map $\Delta_i$ in \Cref{Delta} (\cite[Proposition 3.3]{MR1736363}). %Since $X$ is simply connected, the map $H_1(\Omega X)\to H_1(LX)$ is an isomorphism and so is the natural map $H_1(\Omega X)\to H_2(X)$. But note that the circle action restricted to $\Omega X$ is trivial so the map $d_2\colon H_2(X)\to H_2(LX)$ is the same map as the map induced by the section of $LX\to X$ given by constant loops. Therefore, $d_2$ is injective. 

%In particular, it implies that $H_2(LX\hcoker S^1)\cong \bZ\oplus H_2(LX)/H_2(X)$ 
%
%The Mayer-Vietoris exact sequence gives
%\[
%H_2(LX)\to H_2(X)\oplus \bZ\oplus H_2(LX)/H_2(X)\to H_2(rX)\to 0. 
%\]
%Hence, it implies that the first map is injective and using compatibility of maps in \Cref{compatible}, the map $rX\to  \mathrm{B}\text{Homeo}^+(\bR^2)$ induces an isomorphism on $H_2(-;\bZ)$.  To show that $H_3(rX)=0$, we need to show that 
%\begin{equation}\label{eq1}
%H_3(LX)\to H_3(X)\oplus H_3(LX\hcoker S^1),
%\end{equation}
%is surjective. Since $X$ is simply connected, the Hurewicz map $\pi_3(X)\to H_3(X)$ is surjective which implies that $\sigma\colon H_2(\Omega X)\to H_3(X)$ is also surjective. But since the action of circle on based loop space is trivial, the map $\sigma$ factors as follows.
From the first part of \Cref{Delta}, we know that the map $d_2$ in 
\[
H_2(LX; \bZ)\xrightarrow{d_2} H_3(LX; \bZ)\xrightarrow{\text{ev}_*}H_3(X; \bZ).
\]
is injective and the natural map $\text{ev}_*\colon d_2(H_2(LX; \bZ))\to H_3(X; \bZ)$ is an isomorphism. Given that $d_2(H_2(LX; \bZ))$ is the kernel of the surjection $H_3(LX)\twoheadrightarrow H_3(LX\hcoker S^1)$, the map \ref{eq1} is in fact an isomorphism. So we have $H_3(rX; \bZ)=0$. 

Now since the map \ref{eq1} is an isomorphism, to show that $\psi$ induces an isomorphism on $H_4(-;\bZ)$, it is enough to show that the cokernel of the map
\begin{equation}\label{4}
H_4(LX; \bZ)\to H_4(X; \bZ)\oplus H_4(LX\hcoker S^1; \bZ),
\end{equation}
is the $\bZ$ summand in $H_4(LX\hcoker S^1; \bZ)$ coming from $0$-row in the Serre spectral sequence. This is because, in that case, the composition 
\[
\mathrm{B}S^1\to LX\hcoker S^1\to rX,
\]
where the first map is the section of $p$, induces an isomorphism on $H_4(-;\bZ)$; and \Cref{compatible} implies that the composition 
\[
\mathrm{B}S^1\to LX\hcoker S^1\to rX\xrightarrow{\psi}\mathrm{B}\text{Homeo}^+(\bR^2),
\]
induces a homology isomorphism.

To do this, from the second part of \Cref{Delta}, we know that  in 
\[
H_3(LX; \bZ)\xrightarrow{d_2} H_4(LX; \bZ)\xrightarrow{\text{ev}_*}H_4(X; \bZ).
\]
 $d_2(H_3(LX; \bZ))$ surjects to $H_4(X; \bZ)$ via $\text{ev}_*$. Since $H_4(LX\hcoker S^1; \bZ)$ is isomorphic to $\bZ\oplus H_4(LX; \bZ)/d_2(H_3(LX; \bZ))$, the cokernel of the map 
\[
H_4(LX; \bZ)\to H_4(X; \bZ)\oplus H_4(LX\hcoker S^1; \bZ),
\]
is the $\bZ$ summand in $H_4(LX\hcoker S^1; \bZ)$. Hence, $\psi$ induces an isomorphism on $H_4(-;\bZ)$.
%  By  Hurewicz theorem, we know that $\pi_3(X)\cong H_3(X)$ and the map 
%$
%\pi_4(X)\to H_4(X)
%$
%is surjective. Hence, in the first row of the spectral sequence in figure \ref{s''}, we have $H_1(LX)=0$. Similarly, we can show the differential 
%\[
%d_2\colon H_2(LX)\to H_3(LX),
%\]
%is injective and since $H_3(\Omega X)\to H_4(X)$ is surjective, we can deduce as in case $1$ that $d_2(H_3(LX))\twoheadrightarrow H_4(X)$ is also surjective. Therefore, we have $H_4(LX\hcoker S^1)$ is isomorphic to $\bZ\oplus H_4(LX)/d_2(H_3(LX)$. Note that the map 
%\begin{equation*}
%H_3(LX)\to H_3(X)\oplus H_3(LX\hcoker S^1),
%\end{equation*}
%is an isomorphism since by \Cref{X} the space $X$ is $2$-connected. Given that $H_4(LX)$ surjects to $H_4(X)\oplus H_4(LX)/d_2(H_3(LX)$, the cokernel of the map
%\[
%H_4(LX)\to H_4(X)\oplus H_4(LX\hcoker S^1),
%\]
%is $\bZ$. And again using compatibility of maps in \Cref{compatible}, the map $\alpha$ induces an isomorphism on $H_4(-;\bZ)$. 
\end{proof}
\begin{quest}
Is there a ``discrete" Godbillon-Vey class similar to the case codimenison $1$ PL-foliations in \cite{ghys1987groupe}, to give a nontrivial map $H_5(\overline{\mathrm{B}\Gamma}_2^{\textnormal{\text{PL}}};\bZ)\to \bR$?
\end{quest}
\subsection{Homology of PL surface homeomorphisms made discrete.} To relate the group homology of PL surface homeomorphisms to the homotopy type of $\overline{\mathrm{B}\Gamma}_2^{\textnormal{\text{PL}}}$, we first recall a version of Mather-Thurston's theorem that the author proved (\cite[Section 5]{nariman2020thurston}).  Let $M$ be an $n$-dimensional PL-manifold possibly with a non-empty boundary. The topological group $\text{PL}(M, \text{rel }\partial)$ is the realization of the simplicial group $S_{\bullet}\text{PL}(M, \text{rel }\partial)$ whose $k$-simplices are given by the set of PL homeomorphisms of $\Delta^k\times M$ that commute with the projection to the first factor and whose supports are away from the boundary of $M$. We have the map $\text{PL}(M, \text{rel }\partial)^{\delta}\to \text{PL}(M, \text{rel }\partial)$ given by the inclusion of $0$-simplices. This map induces the map between classifying spaces
\[
\mathrm{B}\text{PL}(M, \text{rel }\partial)^{\delta}\to \mathrm{B}\text{PL}(M, \text{rel }\partial),
\]
whose homotopy fiber is denoted by $\overline{\mathrm{B}\text{PL}(M, \text{rel }\partial)}$. This homotopy fiber can also be described as the realization of the semi-simplicial set $S_{\bullet}(\overline{\mathrm{B}\text{PL}(M, \text{rel }\partial)})$ whose $k$-simplices are given by the set of codimension $n$ foliations on $\Delta^k \times M $ that are transverse to the fibers of the projection $ \Delta^k\times M\to \Delta^k$ and the holonomies are compactly supported PL-homeomorphisms of the fiber $M$. Note that $S_{\bullet}\text{PL}(M, \text{rel }\partial)$ acts level-wise on the simplices $S_{\bullet}(\overline{\mathrm{B}\text{PL}(M, \text{rel }\partial)})$. Hence, we have an action of $\text{PL}(M, \text{rel }\partial)$ on $\overline{\mathrm{B}\text{PL}(M, \text{rel }\partial)}$.

  On the other hand,  the space $\overline{\mathrm{B}\text{PL}(M, \text{rel }\partial)}$ is related to the classifying space of the groupoid $\Gamma_n^{\text{PL}}$ as follows. Recall that forgetting the germ of the foliation of foliated microbundles induces the map
\[
\nu\colon\mathrm{B}\Gamma_n^{\text{PL}}\to \mathrm{B}\text{PL}(\bR^n),
\]
between classifying spaces. Let $\tau_M\colon M\to \mathrm{B}\text{PL}(\bR^n)$ be a map that classifies the tangent microbundle of $M$. Let $\text{Sect}_{\partial}(\tau_M^*(\nu))$ be the space of sections of the pullback bundle $\tau_M^*(\nu)$ over $M$ that are supported away from the boundary. The support of a section is measured with respect to a fixed base section. By the obstruction theory and the fact that the fiber of the bundle $\tau_M^*(\nu)$ over $M$ is at least $n$-connected, the space of sections is connected. So different choices of a base section do not change the homotopy type of the compactly supported sections. 

We recall from (\cite[Section 1.2.2]{nariman2015stable}) how $\text{PL}(M, \text{rel }\partial)$ acts on $\text{Sect}_{\partial}(\tau_M^*(\nu))$. The PL tangent microbundle of the PL manifold $M$ is the following microbundle
\[
M\xrightarrow{\Delta}M\times M\xrightarrow{pr_1}M.
\]
A germ of PL foliation $c$ on $\Delta^p\times M\times M$ at $\Delta^p\times \text{diag }M$ which is transverse to the fiber of the projection $id\times pr_1: \Delta^p\times M\times M\to \Delta^p\times M$, is said to be horizontal at $x\in M$ if there exists a neighborhood $U$ around $x$ such that the restriction of the foliation $c$ to $\Delta^p\times U\times U$ is induced by the projection $\Delta^p\times U\times U\to U$ on the last factor. By the support of $c$, we mean the set of $x\in M$, where $c$ is not horizontal. 

Now we define the semi-simplicial set $S_{\bullet}(  \mathrm{Sect}_c(\tau_M^*(\nu)))$  whose $p$-simplices are given as the set of germs of PL foliations on $\Delta^p\times M\times M$ at $\Delta^p\times \text{diag }M$ which are transverse to the fiber of the projection $id\times pr_1: \Delta^p\times M\times M\to \Delta^p\times M$ and have compact support. The realization of this semi-simplicial set gives a model for the compactly supported sections $ \mathrm{Sect}_c(\tau_M^*(\nu))$. Similar to the previous case, there is an obvious action of $S_{\bullet}(\text{PL}(M, \text{rel }\partial))$ on $S_{\bullet}(  \mathrm{Sect}_c(\tau_M^*(\nu)))$.

In this model, there is a natural map (\cite[Section 1.2.2]{nariman2015stable})
\begin{equation}\label{MT1}
\overline{\mathrm{B}\text{PL}(M, \text{rel }\partial)}\to \text{Sect}_{\partial}(\tau_M^*(\nu)),
\end{equation}
that is $\textnormal{\text{PL}}(M, \text{rel }\partial)$-equivariant and we showed that it induces a homology isomorphism (\cite{nariman2020thurston}). \footnote{Gael Meigniez also told the author that his method in \cite{meigniez2021quasicomplementary} can be used to prove the PL version of Mather-Thurston's theorem.} Therefore, the homotopy quotients of the action of $\textnormal{\text{PL}}(M, \text{rel }\partial)$-equivariant  on both sides also induces a homology isomorphism. Hence, $\mathrm{B}{\textnormal{\text{PL}}}^{\delta}(M,\text{rel }\partial)$ is homology isomorphic to $\text{Sect}_{\partial}(\tau_M^*(\nu))\hcoker  \textnormal{\text{PL}}(M, \text{rel }\partial)$.
\begin{proof}[Proof of \Cref{perfectness}]  Let $\Sigma$ be an oriented  closed surface possibly with non-empty boundary. To show that the map
\[
\mathrm{B}{\textnormal{\text{PL}}}^{\delta}_0(\Sigma,\text{rel }\partial)\to \mathrm{B}{\textnormal{\text{PL}}}_0(\Sigma,\text{rel }\partial),
\]
induces an isomorphism on $H_*(-;\bZ)$ for $*\leq 2$ and a surjection on $H_3(-;\bZ)$, it is enough to show that $H_*(\overline{ \mathrm{B}{\textnormal{\text{PL}}}(\Sigma,\text{rel }\partial)};\bZ)$ vanishes for $*\leq 2$. By the Mather-Thurston's theorem described above, these groups are isomorphic to $H_*(\text{Sect}_{\partial}(\tau_{\Sigma}^*(\nu));\bZ)$. Hence, it is enough to show that $\text{Sect}_{\partial}(\tau_{\Sigma}^*(\nu))$ is $2$-connected. Note that in general, if the fiber of a fibration $\pi\colon E\to M^n$ is $k$ connected, elementary obstruction theory argument implies that the space of section of $\pi$ is $(k-n)$-connected. Now recall that  the homotopy fiber of the fibration $\tau_{\Sigma}^*(\nu)\to \Sigma$ is $\overline{\mathrm{B}\Gamma}_2^{\textnormal{\text{PL}}}$ which is $4$-connected by \Cref{PL}. Therefore,  the space $\text{Sect}_{\partial}(\tau_{\Sigma}^*(\nu))$ is $2$-connected. 
\end{proof}
\begin{rem}
Calegari and Rolfsen proved in particular the local indicability of PL homeomorphisms (\cite[Theorem 3.3.1]{calegari2015groups}) of manifolds relative to the non-empty boundary. As a consequence of their local indicability result, one deduces that no finitely generated subgroup of ${\textnormal{\text{PL}}}^{\delta}_{c,0}(\Sigma)$ is a simple group. However, our theorem shows that the ambient group ${\textnormal{\text{PL}}}^{\delta}_{c,0}(\Sigma)$ is simple which is similar to Thurston's stability for  $C^1$-diffeomorphisms groups (\cite{thurston1974generalization}). 
\end{rem}
\begin{rem}
We know that ${\textnormal{\text{PL}}}_0(\Sigma,\text{rel }\partial)\simeq \Diff_0(\Sigma,\text{rel }\partial)$ (\cite[Page 8]{MR652596}). Given that the homotopy type of $\Diff_0(\Sigma,\text{rel }\partial)$ is completely known for all surfaces (\cite{earle1969fibre}), we could also compute the second group homology of ${\textnormal{\text{PL}}}^{\delta}_0(\Sigma,\text{rel }\partial)$ with $\bF_p$ coefficients. For example if $\Sigma$ is a hyperbolic surface, we obtain $H_2(\mathrm{B}{\textnormal{\text{PL}}}^{\delta}_0(\Sigma,\text{rel }\partial);\bZ)$ is trivial. 
\end{rem}
In this dimension, it is known (\cite[Page 8]{MR652596}) that $\text{PL}^+(\bR^2)\simeq \text{SO}(2)$. Therefore, $H^*( \mathrm{B}\text{PL}^+(\bR^2);\bQ)$ is generated by the Euler class $e\in H^2(\mathrm{B}\text{PL}^+(\bR^2);\bQ)$. A consequence of our computation with Greenberg's model is the following non-vanishing result.
\begin{thm}\label{Euler}
The classes $\nu^*(e^k)\in H^{2k}(\mathrm{B}\Gamma_2^{\textnormal{\text{PL}}};\bQ)$ are nontrivial for all $k$.
\end{thm}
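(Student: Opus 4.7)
The plan is to show that the map $\alpha\colon rX\to \mathrm{B}S^1$ modeling $\nu$ admits a homotopy section $\sigma\colon \mathrm{B}S^1\to rX$. Granting this, the induced map $\alpha^*=\nu^*$ is split injective on rational cohomology, and since $H^*(\mathrm{B}S^1;\bQ)\cong \bQ[e]$ is a polynomial ring on the Euler class, every power $\nu^*(e^k)$ is automatically nontrivial in $H^{2k}(\mathrm{B}\Gamma_2^{\textnormal{\text{PL}}};\bQ)$.

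To build $\sigma$, recall from the construction of $\alpha$ that, by the universal property of the pushout \ref{3}, $\alpha$ restricts on the image of the canonical map $LX\hcoker S^1 \to rX$ to the Borel projection $LX\hcoker S^1\to \mathrm{B}S^1$. It therefore suffices to produce a section $s_0$ of this Borel projection and set $\sigma$ to be the composition of $s_0$ with $LX\hcoker S^1 \to rX$. Such an $s_0$ is supplied by the constant-loop construction: the rotation action of $S^1$ on $LX$ fixes the subspace of constant loops pointwise, so any choice of basepoint $x_0\in X$ determines an $S^1$-equivariant inclusion $\{x_0\}\hookrightarrow LX$, whose homotopy quotient is a section $s_0\colon \mathrm{B}S^1\to LX\hcoker S^1$ of the Borel projection. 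Then, by the pushout universal property again, $\alpha\circ \sigma$ is homotopic to the identity of $\mathrm{B}S^1$.

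The argument is structural and I do not expect a serious obstacle; the only point to verify with some care is that $\alpha$ is well defined, i.e.\ that the two component maps $LX\hcoker S^1\to \mathrm{B}S^1$ (Borel projection) and $X\to \mathrm{B}S^1$ (trivial) inducing $\alpha$ are compatible up to homotopy on $LX$. This compatibility is automatic: both composites $LX\to LX\hcoker S^1\to \mathrm{B}S^1$ and $LX\xrightarrow{e} X\to \mathrm{B}S^1$ are null-homotopic, the first because $LX$ is the fiber of the Borel fibration \ref{LX} and the second because the map $X\to \mathrm{B}S^1$ is constant. Once $\alpha$ is fixed by such a choice of null-homotopy, the section $\sigma$ described above provides the desired splitting, and $\nu^*(e^k)\neq 0$ for all $k\geq 0$ follows immediately.
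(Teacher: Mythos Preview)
Your proposal is correct and follows essentially the same approach as the paper: both arguments exhibit a homotopy section of $\nu\simeq\alpha\colon rX\to \mathrm{B}S^1$ by observing that the Borel projection $LX\hcoker S^1\to \mathrm{B}S^1$ has a section coming from the $S^1$-fixed constant loops, and that this section pushes forward to a section of $\alpha$ via the canonical map $LX\hcoker S^1\to rX$. Your additional remark on the compatibility needed to define $\alpha$ is a harmless elaboration of what the paper takes for granted.
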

This is in contrast with the smooth case. Since in the smooth case we also have the Euler class $\nu^*(e)\in H^2(\mathrm{B}\Gamma_2;\bQ)$ and as a consequence of the Bott vanishing theorem $\nu^*(e^4)$ vanishes in $H^8(\mathrm{B}\Gamma_2;\bQ)$ and it is still open whether $\nu^*(e^3)$ is non-trivial for smooth foliations (\cite[Problem F.5.3]{MR1271828}).
Instead of the identity component, if we consider the entire group ${\textnormal{\text{PL}}}^{\delta}(\Sigma,\text{rel }\partial)$, using \Cref{Euler} and the method in \cite[Theorem 0.4]{nariman2015stable}, we can prove the following non-vanishing result in {\it the stable range}.

\begin{thm}\label{MW}
Let $\Sigma$ be a compact orientable surface, then the map
\[
H^*(\mathrm{B}{\textnormal{\text{PL}}}(\Sigma,\text{rel }\partial);\bQ)\to H^*(\mathrm{B}{\textnormal{\text{PL}}}^{\delta}(\Sigma,\text{rel }\partial);\bQ),
\]
induces an injection when $*\leq (2g(\Sigma)-2)/3$ where $g(\Sigma)$ is the genus of the surface $\Sigma$.
\end{thm} 
As a consequence of Madsen-Weiss' theorem (\cite{madsen2007stable}), $H^*(\mathrm{B}{\textnormal{\text{PL}}}(\Sigma,\text{rel }\partial);\bQ)$ is isomorphic to the polynomial ring $\bQ[\kappa_1,\kappa_2,\dots]$ in the stable range i.e. $*\leq (2g(\Sigma)-2)/3$.  Here $\kappa_i$ are certain characteristic classes of surface bundles known as $i$-th MMM classes whose degree is $2i$. This is also in contrast with the case of smooth diffeomorphisms. Since, in particular, we have the following nonvanishing result.
\begin{cor}
$\kappa_i$ are all non-trivial in $H^*(\mathrm{B}{\textnormal{\text{PL}}}^{\delta}(\Sigma,\text{rel }\partial);\bQ)$ as long as $i\leq (2g(\Sigma)-2)/6$.
\end{cor}
%It is known again by the Bott vanishing theorem (see \cite[Theorem 8.1]{morita1987characteristic}) that $\kappa_i$ vanishes in $H^*(\BdDiff(\Sigma,\text{rel }\partial);\bQ)$ for all $i>2$ and Kotschick-Morita (\cite{kotschick2005signatures}) proved that $\kappa_1$ is non-trivial in $H^2(\BdDiff(\Sigma,\text{rel }\partial);\bQ)$ as long as $g(\Sigma)\geq 3$. However, it is still open (\cite{kotschick2005signatures}) whether $\kappa_2$ is non-trivial in $H^4(\BdDiff(\Sigma,\text{rel }\partial);\bQ)$.
Since the proof of \Cref{MW} uses some background from \cite[Section 1 and 2]{nariman2015stable}, we shall first explain how to adapt these techniques to the case of PL homeomorphisms of surfaces. The proofs in \cite{nariman2015stable} are formulated for diffeomorphism groups of surfaces, but since in dimension $2$, diffeomorphism group of a surface has the same homotopy type as of its PL homeomorphism group, we can use the results of \cite{nariman2015stable} as follows.

In dimension $2$, we know (\cite[Page 8]{MR652596}) that $\phi\colon\mathrm{B}\text{GL}^+_2(\bR)\to \mathrm{B}\text{PL}^+_2(\bR)$ and $ \eta\colon\mathrm{B}\text{PL}^+_2(\bR)\to\mathrm{B}\text{Top}^+_2$ are weak homotopy equivalences where $\text{Top}^+_2$ is the group of orientation preserving homeomorphisms of $\bR^2$. Let $\rho$ be a homotopy inverse to $\eta$. We shall consider the following tangential structures
\[
\nu\colon \mathrm{B}\Gamma_2^{\text{PL}}\to \mathrm{B}\text{PL}^+(\bR^2),
\] 
\[
\nu^s\colon \phi^*(\nu)\to \mathrm{B}\text{GL}^+_2(\bR),
\] 
\[
\nu^t\colon \rho^*(\nu)\to \mathrm{B}\text{Top}^+_2,
\] 
where $\phi^*(\nu)$ and $\rho^*(\nu)$ are the pullbacks of the fibration induced by $\nu$ with the homotopy fiber $\overline{\mathrm{B}\Gamma}_2^{\textnormal{\text{PL}}}$.  Hence, these fibrations are all fiber homotopy equivalent. Let $\tau^s_{\Sigma}, \tau^p_{\Sigma}$ and $\tau^t_{\Sigma}$ be the map classifying the tangent (micro)bundles in the smooth, PL and topological category respectively. 

Given the above homotopy equivalences, the space of sections $\text{Sect}_{\partial}((\tau^s_{\Sigma})^*(\nu^s))$, $\text{Sect}_{\partial}((\tau^p_{\Sigma})^*(\nu))$ and $\text{Sect}_{\partial}((\tau^t_{\Sigma})^*(\nu^t))$ are also all homotopy equivalent. Now since we have $\Diff(\Sigma, \text{rel }\partial)\simeq \text{Homeo}(\Sigma, \text{rel }\partial)\simeq \text{PL}(\Sigma, \text{rel }\partial)$ (\cite[Page 8]{MR652596}), we get a zig-zag of weak homotopy equivalences
% So we consider one model for all three spaces which is the realization of $S_{\bullet}(  \mathrm{Sect}_c(\tau_M^*(\nu)))$. But note that the simplicial group $S_{\bullet}(\text{Homeo}(\Sigma, \text{rel }\partial))$ which is the singular set on $\text{Homeo}(\Sigma, \text{rel }\partial)$ also acts levelwise on $S_{\bullet}(\mathrm{Sect}_c(\tau_M^*(\nu)))$. This is because in general if we have transversely PL foliation $\mathcal{F}$ on a PL manifold $N$, then the image of $\mathcal{F}$ under each homeomorphism of $N$ is still a transversely PL foliation. 

%{\small\[
%|S_{\bullet}(  \mathrm{Sect}_c(\tau_M^*(\nu)))|\hcoker |S_{\bullet}(\text{Diff}(\Sigma, \text{rel }\partial))| \xrightarrow{\simeq} S_{\bullet}(  \mathrm{Sect}_c(\tau_M^*(\nu)))\hcoker |S_{\bullet}(\text{Homeo}(\Sigma, \text{rel }\partial))|\xleftarrow{\simeq} S_{\bullet}(  \mathrm{Sect}_c(\tau_M^*(\nu)))\hcoker |S_{\bullet}(\text{PL}(\Sigma, \text{rel }\partial))|.
%\]}
   \[
 \begin{tikzpicture}[node distance=4.3cm, auto]
  \node (A) {$|S_{\bullet}(  \mathrm{Sect}_c((\tau^s_{\Sigma})^*(\nu^s)))|\hcoker |S_{\bullet}(\text{Diff}(\Sigma, \text{rel }\partial))|$};
  \node (D) [below of=A, node distance=1.9cm]{$$};
  \node (B) [right of=D,node distance=3cm]{$|S_{\bullet}(  \mathrm{Sect}_c((\tau^t_{\Sigma})^*(\nu^t)))|\hcoker |S_{\bullet}(\text{Homeo}(\Sigma, \text{rel }\partial))|.$};
  \node (C) [right of=A, node distance=7cm]{$|S_{\bullet}(  \mathrm{Sect}_c((\tau^p_{\Sigma})^*(\nu)))|\hcoker |S_{\bullet}(\text{PL}(\Sigma, \text{rel }\partial))|$};
  %\draw [->] (A) to node {$\simeq$} (C);
  \draw [<-] (B) to node {$\simeq$} (C);
  \draw [->] (A) to node {$\simeq$} (B);
 \end{tikzpicture}
\]
But as we explained the analog of Mather-Thurston's theorem for PL homeomorphisms, the equivariance of the homology equivalence \ref{MT1} implies that $|S_{\bullet}(  \mathrm{Sect}_c((\tau^p_{\Sigma})^*(\nu)))|\hcoker |S_{\bullet}(\text{PL}(\Sigma, \text{rel }\partial))|$ is homology isomorphic to $\mathrm{B}{\textnormal{\text{PL}}}^{\delta}(\Sigma,\text{rel }\partial)$. Given the above zig-zag,  we have the following lemma.
\begin{lem}
The classifying space $\mathrm{B}{\textnormal{\text{PL}}}^{\delta}(\Sigma,\text{rel }\partial)$ is homology isomorphic to $|S_{\bullet}(  \mathrm{Sect}_c((\tau^s_{\Sigma})^*(\nu^s)))|\hcoker |S_{\bullet}(\textnormal{\text{Diff}}(\Sigma, \text{rel }\partial))|$.
\end{lem}
On the other hand, there is a natural map (\cite[Equation 1.12]{nariman2015stable}) from $|S_{\bullet}(  \mathrm{Sect}_c((\tau^s_{\Sigma})^*(\nu^s)))|\hcoker |S_{\bullet}(\textnormal{\text{Diff}}(\Sigma, \text{rel }\partial))|$ to the moduli space $\mathcal{M}^{\nu^s}(\Sigma)$ of tangential $\nu^s$-structures on $\Sigma$ defined in \cite[Section 1.2.3]{nariman2015stable}. The space $\mathcal{M}^{\nu^s}(\Sigma)$ has been very well studied and as we used the techniques in \cite{randal2009resolutions, galatius2009homotopy, galatius2010monoids}, it exhibits homological stability. As we shall recall in the following proof, its stable homology is described in terms of the Madsen-Tillman spectrum of the corresponding tangential structure. 

\begin{proof}[Proof of \Cref{Euler} and \Cref{MW}]
The key point in \Cref{compatible} is that Greenberg's model for $\mathrm{B}\Gamma_2^{\text{PL}}$ allows up to homotopy, to find a section for the map
\[
\nu\colon \mathrm{B}\Gamma_2^{\text{PL}}\to \mathrm{B}S^1.
\]
Recall that  a null-homotopic map $X\to \mathrm{B}S^1$ and the natural map $LX\hcoker S^1\to \mathrm{B}S^1$ induces a map $rX\to \mathrm{B}S^1$. But a section to the map $LX\hcoker S^1\to \mathrm{B}S^1$ induces a section for $rX\to \mathrm{B}S^1$. Therefore, $\nu^*(e^k)\in H^{2k}( \mathrm{B}\Gamma_2^{\text{PL}};\bQ)$ are nontrivial for all $k$. 

Now let $\gamma$ be the tautological $2$-plane bundle over $\mathrm{B}S^1$ and let $\text{MT}\nu^s$ be the Thom spectrum of the virtual bundle $(\nu^s)^*(-\gamma)$. And let $\Omega^{\infty}_0 \text{MT}\nu^s$ be the base point component of the infinite loop space associated with this Thom spectrum. Then exactly the same method as in \cite[Theorem 0.4]{nariman2015stable} goes through to show that there is a map
\[
\mathrm{B}\text{PL}(\Sigma,\text{rel }\partial)^{\delta}\to \Omega^{\infty}_0 \text{MT}\nu^s,
\]
which is homology isomorphism up to degrees $*\leq (2g(\Sigma)-2)/3$. Hence, we obtain that 
\[
H^*(\mathrm{B}\text{PL}(\Sigma,\text{rel }\partial)^{\delta};\bQ)\cong \text{Sym}^*(H^{*>2}(\mathrm{B}\Gamma_2^{\text{PL}};\bQ)[-2]),
\]
in degrees $*\leq (2g(\Sigma)-2)/3$ where the right hand side is the polynomial ring with the generators in the graded vector space $H^{*>2}(\mathrm{B}\Gamma_2^{\text{PL}};\bQ)$ which is shifted by degree $2$. Since by Madsen-Weiss theorem $H^*(\mathrm{B}\text{PL}(\Sigma,\text{rel }\partial);\bQ)$ is isomorphic to $\text{Sym}^*(H^{*>2}(\mathrm{B}S^1;\bQ)[-2])$ in the same stable range and we know that 
\[
H^{*}(\mathrm{B}S^1;\bQ)\to H^{*}(\mathrm{B}\Gamma_2^{\text{PL}};\bQ),
\]
is injective, then so is 
\[
H^*(\mathrm{B}\text{PL}(\Sigma,\text{rel }\partial);\bQ)\to H^*(\mathrm{B}\text{PL}(\Sigma,\text{rel }\partial)^{\delta};\bQ),
\]
in stable range.
\end{proof}

\bibliographystyle{alpha}
\bibliography{reference}

\begin{thebibliography}{GMTW09}

\bibitem[BH81]{MR652596}
W.~Balcerak and B.~Hajduk.
\newblock Homotopy type of automorphism groups of manifolds.
\newblock {\em Colloq. Math.}, 45(1):1--33 (1982), 1981.

\bibitem[BO99]{MR1736363}
Marcel B\"{o}kstedt and Iver Ottosen.
\newblock Homotopy orbits of free loop spaces.
\newblock {\em Fund. Math.}, 162(3):251--275, 1999.

\bibitem[Bot72]{bott1972lectures}
Raoul Bott.
\newblock {\em Lectures on characteristic classes and foliations}.
\newblock Springer, 1972.

\bibitem[CR15]{calegari2015groups}
Danny Calegari and Dale Rolfsen.
\newblock Groups of pl homeomorphisms of cubes.
\newblock In {\em Annales de la Facult{\'e} des sciences de Toulouse:
  Math{\'e}matiques}, volume~24, pages 1261--1292, 2015.

\bibitem[dLHM83]{de1983acyclic}
Pierre de~La~Harpe and Dusa McDuff.
\newblock Acyclic groups of automorphisms.
\newblock {\em Commentarii Mathematici Helvetici}, 58(1):48--71, 1983.

\bibitem[EE69]{earle1969fibre}
Clifford~J Earle and James Eells.
\newblock A fibre bundle description of teichm{\"u}ller theory.
\newblock {\em Journal of Differential Geometry}, 3(1-2):19--43, 1969.

\bibitem[Eps70]{MR267589}
D.~B.~A. Epstein.
\newblock The simplicity of certain groups of homeomorphisms.
\newblock {\em Compositio Math.}, 22:165--173, 1970.

\bibitem[GMTW09]{galatius2009homotopy}
S{\o}ren Galatius, Ib~Madsen, Ulrike Tillmann, and Michael Weiss.
\newblock The homotopy type of the cobordism category.
\newblock {\em Acta mathematica}, 202(2):195--239, 2009.

\bibitem[Gre92]{MR1200422}
Peter Greenberg.
\newblock Generators and relations in the classifying space for {PL}
  foliations.
\newblock {\em Topology Appl.}, 48(3):185--205, 1992.

\bibitem[GRW10]{galatius2010monoids}
S{\o}ren Galatius and Oscar Randal-Williams.
\newblock Monoids of moduli spaces of manifolds.
\newblock {\em Geom. Topol}, 14(3):1243--1302, 2010.

\bibitem[GS87]{ghys1987groupe}
{\'E}tienne Ghys and Vlad Sergiescu.
\newblock Sur un groupe remarquable de diff{\'e}omorphismes du cercle.
\newblock {\em Commentarii Mathematici Helvetici}, 62(1):185--239, 1987.

\bibitem[Hae70]{haefliger1970feuilletages}
Andr{\'e} Haefliger.
\newblock Feuilletages sur les vari{\'e}t{\'e}s ouvertes.
\newblock {\em Topology}, 9(2):183--194, 1970.

\bibitem[Hae71]{haefliger1971homotopy}
Andr{\'e} Haefliger.
\newblock Homotopy and integrability.
\newblock In {\em Manifolds-Amsterdam 1970}, pages 133--163. Springer, 1971.

\bibitem[HP64]{haefliger1964classification}
Andr{\'e} Haefliger and Valentin Poenaru.
\newblock La classification des immersions combinatories.
\newblock {\em Publications Math{\'e}matiques de l'Institut des Hautes
  {\'E}tudes Scientifiques}, 23(1):75--91, 1964.

\bibitem[KL66]{kuiper1966microbundles}
Nicolaas~H Kuiper and Richard~K Lashof.
\newblock Microbundles and bundles: I. elementary theory.
\newblock {\em Inventiones mathematicae}, 1(1):1--17, 1966.

\bibitem[KM05]{kotschick2005signatures}
D~Kotschick and S~Morita.
\newblock Signatures of foliated surface bundles and the symplectomorphism
  groups of surfaces.
\newblock {\em Topology}, 44(1):131--149, 2005.

\bibitem[Lan94]{MR1271828}
R\'{e}mi Langevin.
\newblock A list of questions about foliations.
\newblock In {\em Differential topology, foliations, and group actions ({R}io
  de {J}aneiro, 1992)}, volume 161 of {\em Contemp. Math.}, pages 59--80. Amer.
  Math. Soc., Providence, RI, 1994.

\bibitem[Mat71]{MR0288777}
John~N. Mather.
\newblock The vanishing of the homology of certain groups of homeomorphisms.
\newblock {\em Topology}, 10:297--298, 1971.

\bibitem[Mat74]{MR0356129}
John~N. Mather.
\newblock Commutators of diffeomorphisms.
\newblock {\em Comment. Math. Helv.}, 49:512--528, 1974.

\bibitem[Mat11]{mather2011homology}
John~N Mather.
\newblock On the homology of {H}aefliger's classifying space.
\newblock In {\em Differential Topology}, pages 71--116. Springer, 2011.

\bibitem[McD80]{mcduff1980homology}
Dusa McDuff.
\newblock The homology of some groups of diffeomorphisms.
\newblock {\em Commentarii Mathematici Helvetici}, 55(1):97--129, 1980.

\bibitem[Mei21]{meigniez2021quasicomplementary}
Ga{\"e}l Meigniez.
\newblock Quasicomplementary foliations and the {M}ather--{T}hurston theorem.
\newblock {\em Geometry \& Topology}, 25(2):643--710, 2021.

\bibitem[Mil58]{MR0095518}
John Milnor.
\newblock On the existence of a connection with curvature zero.
\newblock {\em Comment. Math. Helv.}, 32:215--223, 1958.

\bibitem[Mil83]{milnor1983homology}
John Milnor.
\newblock On the homology of lie groups made discrete.
\newblock {\em Commentarii Mathematici Helvetici}, 58(1):72--85, 1983.

\bibitem[Moi77]{MR0488059}
Edwin~E. Moise.
\newblock {\em Geometric topology in dimensions {$2$} and {$3$}}.
\newblock Graduate Texts in Mathematics, Vol. 47. Springer-Verlag, New
  York-Heidelberg, 1977.

\bibitem[Mor87]{morita1987characteristic}
Shigeyuki Morita.
\newblock Characteristic classes of surface bundles.
\newblock {\em Inventiones mathematicae}, 90(3):551--577, 1987.

\bibitem[MW07]{madsen2007stable}
Ib~Madsen and Michael Weiss.
\newblock The stable moduli space of {R}iemann surfaces: Mumford's conjecture.
\newblock {\em Annals of mathematics}, pages 843--941, 2007.

\bibitem[Nar17]{nariman2015stable}
Sam Nariman.
\newblock Stable homology of surface diffeomorphism groups made discrete.
\newblock {\em Geom. Topol.}, 21(5):3047--3092, 2017.

\bibitem[Nar23]{nariman2020thurston}
Sam Nariman.
\newblock Thurston's fragmentation and c-principles.
\newblock {\em to appear in {F}orum of {M}athematics, {S}igma.}, 2023.

\bibitem[NS90]{nesterenko1990homology}
Yu~P Nesterenko and Andrei~A Suslin.
\newblock Homology of the full linear group over a local ring, and milnor's
  k-theory.
\newblock {\em Mathematics of the USSR-Izvestiya}, 34(1):121, 1990.

\bibitem[PS83]{MR722372}
Walter Parry and Chih-Han Sah.
\newblock Third homology of {${\rm SL}(2,\,{\bf R})$} made discrete.
\newblock {\em J. Pure Appl. Algebra}, 30(2):181--209, 1983.

\bibitem[RW16]{randal2009resolutions}
Oscar Randal-Williams.
\newblock Resolutions of moduli spaces and homological stability.
\newblock {\em Journal of the European Mathematical Society}, 18:1--81, 2016.

\bibitem[Sus84]{MR772065}
Andrei~A. Suslin.
\newblock On the {$K$}-theory of local fields.
\newblock In {\em Proceedings of the {L}uminy conference on algebraic
  {$K$}-theory ({L}uminy, 1983)}, volume~34, pages 301--318, 1984.

\bibitem[Sus91]{Suslin}
Andrei Suslin.
\newblock K3 of a field and the bloch group.
\newblock In {\em Proceedings of the Steklov Math. Institute}, 1991.
\newblock Proceedings of the Steklov Math. Institute ; Conference date:
  01-01-1991.

\bibitem[Thu74a]{thurston1974foliations}
William Thurston.
\newblock Foliations and groups of diffeomorphisms.
\newblock {\em Bulletin of the American Mathematical Society}, 80(2):304--307,
  1974.

\bibitem[Thu74b]{MR0370619}
William Thurston.
\newblock The theory of foliations of codimension greater than one.
\newblock {\em Comment. Math. Helv.}, 49:214--231, 1974.

\bibitem[Thu74c]{thurston1974generalization}
William~P Thurston.
\newblock A generalization of the {R}eeb stability theorem.
\newblock {\em Topology}, 13(4):347--352, 1974.

\bibitem[Thu76]{MR0425985}
W.~P. Thurston.
\newblock Existence of codimension-one foliations.
\newblock {\em Ann. of Math. (2)}, 104(2):249--268, 1976.

\bibitem[Tsu89]{tsuboi1989foliated}
Takashi Tsuboi.
\newblock On the foliated products of class {$C ^1$}.
\newblock {\em Ann. Math.}, 130:227--271, 1989.

\bibitem[Tsu09]{tsuboi2009classifying}
Takashi Tsuboi.
\newblock Classifying spaces for groupoid structures.
\newblock {\em Contemporary Mathematics}, 498:67, 2009.

\bibitem[Wei13]{weibel2013k}
Charles~A Weibel.
\newblock {\em The K-book: An introduction to algebraic K-theory}, volume 145.
\newblock American Mathematical Society Providence, RI, 2013.

\end{thebibliography}
\end{document}